\newcommand{\al}{\alpha}
\newcommand{\ga}{\gamma}
\newcommand{\de}{\delta}
\newcommand{\om}{\omega}
\newcommand{\ee}{\varepsilon}
\newcommand{\eps}{\varepsilon}
\newcommand{\vv}{\varphi}
\theoremstyle{plain}
\newtheorem{thm}{Theorem}
\newtheorem{lem}{Lemma}
\theoremstyle{remark}
\newtheorem{remark}{Remark}
\DeclareMathOperator*{\Res}{Res}
\begin{document}

\begin{center}
{\large\bf
Recovery of the matrix quadratic differential pencil

from the spectral data
}
\\[0.2cm]
{\bf Natalia Bondarenko} \\[0.2cm]
\end{center}

\vspace{0.5cm}

{\bf Abstract.} We consider a pencil of matrix Sturm-Liouville operators on a finite interval.
We study properties of its spectral characteristics and inverse problems that
consist in recovering of the pencil by the spectral data:
eigenvalues and so--called weight matrices. This inverse problem is reduced to a linear equation
in a Banach space by the method of spectral mappings. Constructive algorithm for the solution
of the inverse problem is provided. 

\medskip

{\bf Keywords.} Matrix quadratic differential pencils, spectral data,
inverse spectral problems, method of spectral mappings.

\medskip

{\bf AMS Mathematics Subject Classification (2010):} 34A55, 34B07, 34B24, 34L40, 47E05.

\vspace{1cm}

{\large \bf 1. Introduction} \\

In this paper, we consider the boundary value problem $L = L(\ell, U, V)$ for the equation
\begin{equation} \label{eqv}
    \ell Y := Y'' + (\rho^2 I + 2 i \rho Q_1(x) + Q_0(x)) Y = 0, \quad x \in (0, \pi),
\end{equation}
with the boundary conditions
\begin{equation} \label{BC}
    \begin{array}{l}
        U(Y) := Y'(0) + (i \rho h_1 + h_0) Y(0) = 0, \\
        V(Y) := Y'(\pi) + (i \rho H_1 + H_0) Y(\pi) = 0.
    \end{array}
\end{equation}
Here $Y(x) = [y_k(x)]_{k = \overline{1, m}}$ is a column vector, $\rho$ is the spectral parameter,
$I$ is the $m \times m$ unit matrix,
$Q_s(x) = [Q_{s, jk}(x)]_{j,k = \overline{1, m}}$ are $m \times m$ matrices with entries
$Q_{s, jk}(x) \in W_1^s[0, \pi]$, $s = 0, 1$
$h_s = [h_{s, j k}]_{j, k = \overline{1, m}}$,
$H_s = [H_{s, j k}]_{j, k = \overline{1, m}}$,
where $h_{s, j k}$, $H_{s, j k}$ are complex numbers.

Differential equations with nonlinear dependence on the spectral parameter,
or with so-called ``energy--dependent'' coefficients, frequently appear in
mathematics and applications (see \cite{Kel71, KS83, Shkal83, Markus86, Yur97} and references therein).
In this work, we study the inverse problem of spectral analysis for 
the matrix pencil $L$, which consists in recovering the coefficients in \eqref{eqv} and \eqref{BC}
from the spectral characteristics of $L$.
In the scalar case ($m = 1$), such problems were studied in papers \cite{GG81, BY06, BY12, HP12}.

If $Q_1(x) \equiv 0$, then $L$ becomes a matrix Sturm-Liouville operator.
The inverse problems theory for these operators 
was developed in works
\cite{AM60, Yur06, CK09, MT10, Bond11, Bond12}. 
We have also studied inverse problems for quadratic pencils of matrix Sturm-Liouville operators
in \cite{BF13, Bond-uniq}. The case of the pencil on a half-line \cite{BF13} appeared to be easier
for investigation since this pencil has only a finite number of eigenvalues. 
In \cite{Bond-uniq},
the case of a finite interval was considered and
the uniqueness theorem for the inverse problem was proved.
In the present paper, we continue this research.
Our goal is to provide a constructive procedure for recovering of the pencil $L$
from its spectral data.  

\smallskip

We assume that the coefficients of the pencil satisfy the conditions:

\begin{enumerate}
\item[(I)] $\det (I \pm h_1) \ne 0$, $\det (I \pm H_1) \ne 0$;
\item[(II)] $Q^{\dagger}_1(x) = -Q_1(x)$,
$Q^{\dagger}_0(x) = Q_0(x)$ for a.e. $x \in [0, \pi]$, $h^{\dagger}_1 = -h_1$, $H^{\dagger}_1 = -H_1$,
$h^{\dagger}_0 = h_0$, $H^{\dagger}_0 = H_0$ (the symbol $\dagger$ denotes the conjugate transpose).
\end{enumerate}

Condition (I) excludes problems of Regge type (see \cite{Yur84}) from the consideration,
as they require a separate investigation.

Let $\Phi(x, \rho) = [\Phi_{jk}(x, \rho)]_{j, k = \overline{1, m}}$
be the matrix solution of equation \eqref{eqv} under the conditions
$U(\Phi) = I$, $V(\Phi) = 0$. We call $\Phi(x, \rho)$ the
\textit{Weyl solution} for $L$. Put $M(\rho) := \Phi(0, \rho)$.
The matrix $M(\rho) = [M_{jk}(\rho)]_{j,k = \overline{1, m}}$ is called
the \textit{Weyl matrix} for $L$. The notion of the Weyl matrix
is a generalization of the notion of the Weyl function ($m$-function)
for the scalar case (see \cite{Mar77, FY01}) and the notion of the
Weyl matrix for the matrix Sturm--Liouville operator (see \cite{Bond11, Bond12}).

We will prove the following properties of spectral characteristics of $L$.

\begin{thm} \label{thm:SD}
The pencil $L$ has a countable set of eigenvalues;

(i) they can be numbered (counting with their multiplicities) to satisfy the following asymptotics as $|n| \to \infty$:
\begin{equation} \label{asymptrho}
\rho_{nq}=n+\om_q+O\left( n^{-1}\right) ,\quad n \in w:=\{\pm 0,\pm 1,\pm 2,\dots\} ,\quad q=\overline{1,m},\quad \om_q\in[0,1);
\end{equation}

(ii) for some sufficiently large $N$, all the eigenvalues $\rho_{nq}$, $|n| \ge N$, are real.

The poles of the Weyl matrix $M(\rho)$ coincide with the eigenvalues of $L$;
for some sufficiently large $N$,

(iii) the poles at $\rho_{nq}$, $|n| \ge N$, are simple;

(iv) the weight matrices $\al_{nq} := \Res_{\rho = \rho_{nq}} M(\rho)$ satisfy the asymptotics
\begin{equation} \label{asymptalpha}
\sum_{s\in J_q}\frac{\al_{n s}}{m_{n s}}=\frac{1}{\pi n}(I-h_1)^{-1}W^\dagger I_q W(I+h_1)^{-1}+O\left(n^{-2}\right),\quad |n|\rightarrow\infty,\quad q=\overline{1,m},
\end{equation}
and $\mbox{rank}\, \al_{nq} = m_{nq}.$ Here $m_{nq}$ are the multiplicities of the eigenvalues $\rho_{nq}$, $J_q = \{s \colon \om_q = \om_s \}$
and $W, I_q$ are some $m \times m$ matrices
(see notation before Lemma~\ref{lem:asymptalpha} for details).

\end{thm}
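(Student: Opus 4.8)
The plan is to base the whole argument on sharp asymptotic representations of a fundamental system of solutions of \eqref{eqv}, from which both the characteristic function and the Weyl matrix are built explicitly. First I would introduce the solutions $\vv(x,\rho)$ and $S(x,\rho)$ of \eqref{eqv} with Cauchy data $\vv(0,\rho)=I$, $\vv'(0,\rho)=-(i\rho h_1+h_0)$ and $S(0,\rho)=0$, $S'(0,\rho)=I$, so that $U(\vv)=0$ and $U(S)=I$. Writing the Weyl solution as $\Phi(x,\rho)=S(x,\rho)+\vv(x,\rho)M(\rho)$ and imposing $V(\Phi)=0$ forces $M(\rho)=-\Delta(\rho)^{-1}V(S)$ with $\Delta(\rho):=V(\vv)$; hence the poles of $M(\rho)$ are exactly the zeros of $\det\Delta(\rho)$, i.e.\ the eigenvalues of $L$, and the entire matrix function $\Delta(\rho)$ is the object to analyze. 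The key technical input is a Birkhoff-type representation of $\vv$ and $S$: the substitution $Y=e^{\pm i\rho x}Z$ turns \eqref{eqv} into $2i\rho(Z'\pm Q_1 Z)+(Z''+Q_0 Z)=0$, whose leading balance $Z'\pm Q_1 Z=0$ yields the matrix amplitude $E(x):=\exp\!\big(-\int_0^x Q_1(t)\,dt\big)$; under condition (II) this $E(x)$ is unitary, and $W:=E(\pi)$ is the matrix appearing in \eqref{asymptalpha}. Converting \eqref{eqv} with the prescribed data into Volterra integral equations and iterating gives, uniformly on compacta, $\vv(x,\rho)=\tfrac12(I-h_1)e^{i\rho x}E(x)+\tfrac12(I+h_1)e^{-i\rho x}E(x)^{-1}+O(\rho^{-1}e^{|\mathrm{Im}\,\rho|x})$ and an analogous expansion for $S$, and substitution into $\Delta=V(\vv)$ produces the leading term $\tfrac{i\rho}{2}\big[(I+H_1)(I-h_1)e^{i\rho\pi}W-(I-H_1)(I+h_1)e^{-i\rho\pi}W^{-1}\big]+O(e^{|\mathrm{Im}\,\rho|\pi})$.

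For part (i) I would extract the zeros of $\det\Delta(\rho)$ from this expression. Setting $\det\Delta(\rho)=0$ reduces to leading order to the requirement that $e^{2i\rho\pi}$ be an eigenvalue of a fixed matrix $T$ assembled from $h_1,H_1,W$; under condition (II) this $T$ is similar to a unitary matrix, so its eigenvalues lie on the unit circle and the numbers $\om_q\in[0,1)$ are $(2\pi)^{-1}$ times their arguments. A Rouch\'e argument inside expanding contours $|\rho-n-\om_q|=\delta$ then locates one cluster of eigenvalues near each $n+\om_q$, giving \eqref{asymptrho}; coincidences among the $\om_q$ are precisely the reason the statement is organized around the index sets $J_q$. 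Part (ii) follows from condition (II), which renders the pencil formally self-adjoint with respect to an appropriate (possibly indefinite) inner product, so that a Lagrange/Green identity confines the non-real eigenvalues to a bounded region; combined with (i) this yields that all $\rho_{nq}$ with $|n|\ge N$ are real. Part (iii) is then a corollary of asymptotic simplicity: for $|n|\ge N$ the zeros in the cluster around $n+\om_q$ stay $O(1)$-separated from the other clusters, the factorization of $\Delta$ shows each retained zero is simple, and the corresponding pole of $M(\rho)=-\Delta(\rho)^{-1}V(S)$ is simple as well.

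For part (iv) I would compute the residues $\al_{nq}=\Res_{\rho=\rho_{nq}}M(\rho)$ directly from $M=-\Delta^{-1}V(S)$. Near a simple zero, $\Delta(\rho)^{-1}$ has a residue of rank $m_{nq}$ governed by the left and right null vectors of $\Delta(\rho_{nq})$; pairing this with the asymptotics of $V(S)$ and summing over the cluster $J_q$ collapses the oscillatory factors $e^{\pm i\rho\pi}$ and leaves the stated leading term $\tfrac{1}{\pi n}(I-h_1)^{-1}W^\dagger I_q W(I+h_1)^{-1}$, where $I_q$ is the spectral projection onto the eigenspace of $T$ attached to $\om_q$. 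The rank identity $\mathrm{rank}\,\al_{nq}=m_{nq}$ comes from matching the order of vanishing of $\det\Delta$ at $\rho_{nq}$ with $\dim\ker\Delta(\rho_{nq})$, which the asymptotic factorization shows to coincide for $|n|\ge N$.

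The hard part will be the matrix (non-commutative) bookkeeping in the clustered regime: when several $\om_q$ coincide, the individual eigenvalues $\rho_{ns}$, $s\in J_q$, need not be simple or even real at finite $n$, so the residue computation must be carried out for the whole group $J_q$ at once, and the cancellation of the factors $e^{\pm i\rho\pi}$ together with the emergence of the projection $I_q$ must be tracked carefully. Establishing, uniformly in $n$, that the summed weight matrix has exactly the rank dictated by the total algebraic multiplicity $\sum_{s\in J_q}m_{ns}$ is the most delicate point, and it is here that condition (I), the invertibility of $I\pm h_1$ and $I\pm H_1$, is essential.
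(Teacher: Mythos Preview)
Your overall architecture for parts (i), (ii) and (iv) matches the paper's: Birkhoff-type asymptotics for $\vv$ and $S$, a Rouch\'e argument on $\det V(\vv)$, a Lagrange identity to confine non-real eigenvalues, and a residue computation for $M(\rho)$ against a model $M_0(\rho)$ to obtain \eqref{asymptalpha}. One correction: the matrix $W$ in \eqref{asymptalpha} is \emph{not} $E(\pi)=P_-(\pi)$ as you write, but the unitary that diagonalizes the matrix $A=P_-^{-1}(\pi)(I+H_1)^{-1}(I-H_1)P_+(\pi)(I+h_1)(I-h_1)^{-1}$ (see \eqref{defA} and \eqref{initAWM}); also note that in the paper's asymptotics for $\vv$ the factors $(I\pm h_1)$ sit on the \emph{right} of $P_{\mp}(x)$, which matters for the non-commutative bookkeeping you flag at the end.

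The genuine gap is in part (iii). You argue that ``the factorization of $\Delta$ shows each retained zero is simple'', but this is false in the matrix case: when $|J_q|>1$ the eigenvalues $\rho_{ns}$, $s\in J_q$, share the same leading term $n+\om_q$ and may coincide for infinitely many $n$ (this is exactly why the statement carries the multiplicities $m_{nq}$ and why \eqref{asymptalpha} is formulated as a sum over $J_q$). Thus $\det V(\vv)$ can have zeros of order $>1$ arbitrarily far out, and simplicity of the \emph{pole} of $M(\rho)$ at $\rho_{nq}$ is not a consequence of asymptotic separation between clusters. The paper's route is different and, as far as I can see, necessary: one shows (Lemma~\ref{lem:symeq}) that $\int_0^\pi \vv^\dagger(x,\rho_{nq})\vv(x,\rho_{nq})\,dx$ is bounded below by a positive matrix for large $|n|$, derives a formula for $\frac{d}{d\rho}V(\vv)$ via a $\psi$--$\vv$ Green identity, and then applies the Agranovich--Marchenko criterion \eqref{lemAM} to rule out a Jordan chain of length $\ge 2$ for $(V(\vv))^{-1}$. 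Concretely, one shows $C_\psi^\dagger\,\frac{d}{d\rho}V(\vv)\big|_{\rho_{nq}}\,C_\vv\to\pm\infty$ along the spectrum, which forces any pair $(a,b)$ in \eqref{lemAM} to be trivial. Your proposal does not contain this ingredient; without it, (iii) (and hence the rank statement in (iv), which hinges on matching $\dim\ker V(\vv)(\rho_{nq})$ with the order of the zero of $\det V(\vv)$) is unproved.
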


Some of these properties were proved in \cite{CW12} but 
for the matrix differential pencil with other boundary conditions,
independent of the spectral parameter. 
Moreover, we need here more precise asymptotics for the recovery of the pencil. 

The properties, established in Theorem~\ref{thm:SD},
are analogous to the ones for self-adjoint matrix Sturm-Liouville operators, 
but only for sufficiently large eigenvalues. The pencil
can have a finite number of nonreal eigenvalues and multiple poles of the Weyl matrix, even when the coefficients satisfy (II). 
This is possible even in the scalar case ($m = 1$). 

\medskip

{\bf Example.} Let $m = 1$, $h_1 = H_1 = h_0 = H_0 = 0$, $Q_1(x) = - i p$, $Q_0(x) = q$, 
where $p$ and $q$ are real constants. Clearly, conditions (I) and (II) are fulfilled.
So we have a scalar boundary value problem
$$
	y'' + (\rho^2 + 2 \rho p + q) y = 0, \quad y'(0) = y'(\pi) = 0.
$$
Its eigenvalues are the roots of the equations $\rho^2 + 2 \rho p + q = n^2$, $n = 0, 1, 2, \dots$. 
Obviously, these equations can have nonreal roots. Now suppose $p = q = 0$. Then one can easily calculate the Weyl function
$M(\rho) = \dfrac{\cos \rho \pi}{\rho \sin \rho \pi}$. Clearly, the eigenvalue $\rho = 0$ is a double pole of $M(\rho)$.

\medskip 

For convenience, renumerate the eigenvalues 
$\{ \rho_{nq}\}_{|n| < N}$ by $\{ \rho_n\}_{n = \overline{1, K}}$.
Denote by $m_n$ the multiplicity of $\rho_n$ as a pole of $M(\rho)$
(then we assume $\rho_n = \rho_{n + 1} = \dots = \rho_{n + m_n - 1}$).
Let $S = \{ n \in [1, K] \colon n = 1 \, \text{or} \, \rho_{n - 1} \ne \rho_n\}$.
It is easy to show that the Weyl matrix admits the following representation:
\begin{equation} \label{reprM}
	M(\rho) = \sum_{n \in S} \sum_{\nu = 0}^{m_n - 1} \frac{\al_{n + \nu}}{(\rho - \rho_n)^{\nu + 1}} +
	\sum_{|n| \ge N} \sum_{q = 1}^m \frac{m_{nq}^{-1} \al_{nq}}{(\rho - \rho_{nq})},
\end{equation}
where $\al_n$, $n = \overline{1, K}$, are some $m \times m$ matrices.
\footnote{In case of multiple eigenvalues, the same weight matrices $\al_{nq}$ will be counted multiple times. 
To avoid this problem, we divide the weight matrices in \eqref{asymptalpha} and \eqref{reprM}
by the multiplicities $m_{nq}$.}

We call the data $\mbox{SD} := \{ \rho_n, \al_n\}_{n = \overline{1, K}} \cup \{ \rho_{nq}, \al_{nq}\}_{|n| \ge N, q = \overline{1, m}}$
{\it the spectral data} of the pencil $L$. This notion generalizes the notion of spectral data for scalar pencils (see \cite{BY12}).
According to \eqref{reprM}, the spectral data determine the Weyl matrix uniquely.
Therefore, the inverse problem by the Weyl matrix, studied in \cite{Bond-uniq}, is equivalent to the following one.

\medskip

{\bf Inverse Problem 1.} {Given spectral data $\mbox{SD}$,
construct the coefficients of the pencil $L$.}

\medskip

Thus, in this paper we develop a constructive solution of Inverse Problem 1. 
The analogous results for the scalar case were obtained in \cite{BY06, BY12} for pencils with arbitrary complex-valued potentials.
However, in our paper we put an additional restriction (II) on the coefficients of the pencil, in order to  
guarantee the simplicity of the poles of the Weyl matrix at large eigenvalues and to work with asymptotics of the corresponding residues.
The scalar case is easier from this point of view, because in the scalar case asymptotic formula \eqref{asymptrho} yields the separation
of the eiganvalues for large $n$, so the poles of the Weyl function are automatically simple. 
In the matrix case, there can be infinitely many groups of multiple eigenvalues, and the general situation 
is very difficult for investigation.

The paper is organized as follows. In \textit{Section 2}, we study the properties of the spectral
characteristics of $L$, crucial for our method of recovery,
and prove Theorem~\ref{thm:SD}.
In \textit{Section 3}, following the ideas of the method of spectral mappings (see \cite{FY01, Yur02})
we choose a model pencil $\tilde L$ and then by integration in the complex plane of the spectral parameter,
reduce Inverse Problem 1 to a linear equation (see eq.\eqref{cont3} and Lemma~\ref{lem:contour}).
In \textit{Section 4}, this equation is transformed into the so-called \textit{main equation} \eqref{main} in a specially constructed Banach space.
We investigate the solvability of this equation
(Theorems~\ref{thm:homo} and \ref{thm:main}).
As opposed to the similar equation for the Sturm-Liouville operator, the main equation
for the pencil is uniquely solvable not for all $x \in [0, \pi]$. 
This difficulty appears even in the scalar case (see \cite{BY06, BY12}).
In \textit{Section 5}, we provide an algorithm for the solution of Inverse Problem~1 ``step by step'', choosing
a new model pencil at each step.
We prove that our algorithm finds coefficients of $L$ on the whole interval $[0, \pi]$
after a finite number of steps.
The algorithm, based on analogous ideas, was developed earlier
for the matrix pencil on the half-line \cite{BF13}.
For simplicity, we work under the assumption $N = 0$, i.e. all the eigenvalues and weight matrices 
satisfy properties (ii), (iii) of Theorem~\ref{thm:SD}.
In \textit{Section 6}, we show how to work with multiple poles of the Weyl matrix. 
We provide necessary modifications of the main equation and, finally, arrive at the solution
of the inverse problem in the general case.

\medskip

{\bf Notation.} Below along with $L$ we consider pencils $\tilde L = L(\tilde \ell, \tilde U, \tilde V)$ and
$L^* = L^*(\ell^*, U^*, V^*)$:
\begin{gather*}
\tilde \ell Y := Y'' + (\rho^2 I + 2 i \rho \tilde Q_1(x) + \tilde Q_0(x)) Y = 0, \quad x \in (0, \pi), \\
\begin{array}{l}
\tilde U(Y) := Y'(0) + (i \rho \tilde h_1 + \tilde h_0) Y(0) = 0, \\
\tilde V(Y) := Y'(\pi) + (i \rho \tilde H_1 + \tilde H_0) Y(\pi) = 0;
\end{array}\\
\ell^* Z := Z'' + Z (\rho^2 I + 2 i \rho Q_1(x) + Q_0(x)) = 0, \quad x \in (0, \pi), \\
\begin{array}{l}
U^*(Z) := Z'(0) + Z(0) (i \rho h_1 + h_0) = 0, \\
V^*(Z) := Z'(\pi) + Z(\pi)(i \rho H_1 + H_0) = 0,
\end{array}
\end{gather*}
where $Z$ is a row vector, and $\tilde L^*$, formed in an obvious way.
We agree that if a symbol $\gamma$ denotes an object related to $L$ then $\tilde \gamma$,
$\gamma^*$ and $\tilde \gamma^*$ denote
the corresponding objects related to $\tilde L$, $L^*$, $\tilde L^*$, respectively.
We mention once more that the conjugate transpose is denoted by $\dagger$. If $Y(x, \rho)$ is a solution of \eqref{eqv}, then
$Y^*(x, \rho) := Y^{\dagger}(x, \bar \rho)$ is a solution of $\ell^* Y^* = 0$.

We consider the space of complex $m$-vectors $\mathbb{C}^m$ with the norm
$$
	\| Y \| = \max_{1 \le j \le m} |y_j|, \quad Y = [y_j]_{j = \overline{1, m}},
$$
the space of complex $m \times m$ matrices $\mathbb{C}^{m \times m}$ with the corresponding induced norm
$$
\| A \| = \max_{1 \le j \le m} \sum_{k = 1}^m |a_{jk} |, \quad A = [a_{jk}]_{j, k = \overline{1, m}}.
$$
and the Hilbert space $L_2((0, \pi), \mathbb{C}^m)$ of $m$-vectors with entries from $L_2(0, \pi)$ with the
following scalar product and the corresponding norm:
$$
 	(Y, Z) = \int_0^{\pi} Y^{\dagger}(x) Z(x) \, dx = \int_0^{\pi} \sum_{j = 1}^m \bar{y}_j(x) z_j(x) \, dx.
$$
$\langle Y, Z \rangle$ denotes the matrix Wronskian $Y' Z - Y Z'$.

In estimates and asymptotics, we use the same symbol $C$ for different constants
independent of $x$, $\rho$, etc, and $\tau := \mbox{Im} \, \rho$.

\bigskip

{\large \bf 2. Properties of spectral characteristics}

\bigskip

Let $\vv(x, \rho) = [\vv_{jk}(x, \rho)]_{j, k = \overline{1, m}}$ and
$S(x, \rho) = [S_{jk}(x, \rho)]_{j, k = \overline{1, m}}$
be the matrix solutions of equation \eqref{eqv}
under the initial conditions
$$
 	S(0, \rho) = U(\vv) = 0, \quad S'(0, \rho) = \vv(x, \rho) = I.
$$

Introduce the matrix functions $P_{+}(x)$ and $P_{-}(x)$
as the solutions of the Cauchy problems
\begin{equation} \label{cauchyP}
	P_{\pm}'(x) = \pm Q_1(x) P_{\pm}(x), \quad P_{\pm}(0) = I.
\end{equation}
When condition (II) holds, they satisfy the relations
\begin{equation} \label{propP}
   P^{\dagger}_{+}(x) P_{+}(x) = P^{\dagger}_{-}(x) P_{-}(x) = I, \quad x \in [0, \pi].
\end{equation}
It was proved in \cite{Bond-uniq}, that for $x \in[0, \pi]$, $\nu = 0, 1$, as $|\rho| \to \infty$:
\begin{equation} \label{asymptS}
S^{(\nu)}(x, \rho)  = \frac{(i \rho)^{\nu - 1}}{2} \exp(i \rho x) P_{-}(x)  + \frac{(-i \rho)^{\nu - 1}}{2}
\exp(- i \rho x) P_{+}(x) + O(\rho^{\nu-2} \exp(|\tau|x)),
\end{equation}
\begin{multline} \label{asymptphi}
 	\vv^{(\nu)}(x, \rho) = \frac{(i \rho)^{\nu}}{2} \exp(i \rho x) P_{-}(x) (I - h_1) +
 	\frac{(- i \rho)^{\nu}}{2} \exp(- i \rho x) P_{+}(x) (I + h_1) + \\ + O(\rho^{\nu -1} \exp(|\tau|x)).
\end{multline}

\begin{lem} \label{lem:asymptrho}
The pencil $L$ has a countable set of eigenvalues. They can be numbered (counting with their multiplicities) in such a way 
that the eigenvalues have the following asymptotics as $|n| \to \infty$:
\begin{equation*}
\rho_{nq}=n+\om_q+O\left( n^{-1}\right) ,\quad n \in w:=\{\pm 0,\pm 1,\pm 2,\dots\} ,\quad q=\overline{1,m},\quad \om_q\in[0,1).
\end{equation*}
\end{lem}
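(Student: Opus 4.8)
The plan is to reduce the problem to counting the zeros of a scalar entire \emph{characteristic function} and then to localize them by Rouch\'e's theorem. Since $\varphi(x,\rho)$ satisfies $U(\varphi)=0$ and $\varphi(0,\rho)=I$, every solution of \eqref{eqv} meeting the left boundary condition is a linear combination of its columns; hence $\rho$ is an eigenvalue of $L$ precisely when $V(\varphi(\cdot,\rho))c=0$ for some $c\ne0$, that is, when
\[
\Delta(\rho):=\det\bigl(\varphi'(\pi,\rho)+(i\rho H_1+H_0)\varphi(\pi,\rho)\bigr)
\]
vanishes. One checks in the standard way that $\Delta$ is entire and that the order of each of its zeros equals the multiplicity of the corresponding eigenvalue, so it suffices to study the zeros of $\Delta$.

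Next I would substitute the asymptotics \eqref{asymptphi} (with $\nu=0,1$, at $x=\pi$) into $\Delta$. Collecting the dominant terms of order $\rho$ gives
\[
\varphi'(\pi,\rho)+(i\rho H_1+H_0)\varphi(\pi,\rho)=\frac{i\rho}{2}\bigl(e^{i\rho\pi}A-e^{-i\rho\pi}B\bigr)+O\bigl(e^{|\tau|\pi}\bigr),
\]
where $A:=(I+H_1)P_-(\pi)(I-h_1)$ and $B:=(I-H_1)P_+(\pi)(I+h_1)$; by condition (I) and the invertibility of $P_\pm(\pi)$ both $A$ and $B$ are nonsingular. Taking determinants and factoring out $e^{-i\rho\pi}$ and $\det A$, the leading part is
\[
\Delta_0(\rho):=\Bigl(\tfrac{i\rho}{2}\Bigr)^m e^{-im\rho\pi}\det A\cdot\prod_{q=1}^m\bigl(e^{2i\rho\pi}-\mu_q\bigr),
\]
where $\mu_1,\dots,\mu_m$ are the eigenvalues of $G:=A^{-1}B$, and $\Delta(\rho)=\Delta_0(\rho)+O(\rho^{m-1}e^{m|\tau|\pi})$.

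The key structural point is that the $\mu_q$ lie on the unit circle, and this is where condition (II) enters. By \eqref{propP} the matrices $P_\pm(\pi)$ are unitary, and since $h_1,H_1$ are skew-Hermitian the Cayley-type factors $(I+h_1)(I-h_1)^{-1}$ and $(I+H_1)^{-1}(I-H_1)$ are unitary. Conjugating $G$ by $(I-h_1)$ I would rewrite it as $(I-h_1)G(I-h_1)^{-1}=P_-(\pi)^{-1}(I+H_1)^{-1}(I-H_1)P_+(\pi)(I+h_1)(I-h_1)^{-1}$, a product of unitary matrices and hence unitary. Therefore $G$ is similar to a unitary matrix, all of its eigenvalues satisfy $|\mu_q|=1$, and we may write $\mu_q=e^{2\pi i\omega_q}$ with $\omega_q\in[0,1)$. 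Consequently the zeros of $\prod_q(e^{2i\rho\pi}-\mu_q)$ are exactly the points $\rho=n+\omega_q$, $n\in\mathbb Z$.

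Finally I would localize the zeros of $\Delta$ by a contour argument. On any contour keeping $\rho$ at distance $\ge\delta$ from the set $\{n+\omega_q\}$ one has $|\Delta_0(\rho)|\ge C|\rho|^m e^{m|\tau|\pi}$, which dominates the remainder $O(\rho^{m-1}e^{m|\tau|\pi})$ once $|\rho|$ is large. Rouch\'e's theorem then shows that $\Delta$ and $\Delta_0$ have equally many zeros inside each such contour; this fixes the numbering so that, counting multiplicities, exactly $m$ eigenvalues lie in each vertical unit strip near $n$, with the multiplicities $m_{nq}$ inherited from those of $\mu_q$, and it yields $\rho_{nq}=n+\omega_q+o(1)$. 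To sharpen $o(1)$ to $O(n^{-1})$ one notes that near a simple zero $\rho^0=n+\omega_q$ of $\Delta_0$ the shift is governed by $-\Delta_1(\rho^0)/\Delta_0'(\rho^0)$, where $\Delta_1=O(\rho^{m-1}e^{m|\tau|\pi})$ and $\Delta_0'$ has exact order $\rho^m$, giving a correction of size $O(n^{-1})$. The main obstacle is precisely the control of clustered eigenvalues: when several $\omega_q$ coincide the factor $\prod_{s\in J_q}(e^{2i\rho\pi}-\mu_s)$ has a multiple root, the perturbed zeros need be neither simple nor well separated, and the sharp rate must then be extracted from the symmetric functions of the whole cluster together with the perturbation estimate, rather than from a naive simple-zero expansion.
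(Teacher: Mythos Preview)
Your approach mirrors the paper's almost exactly: same characteristic function $\Delta=\det V(\varphi)$, same asymptotic factorization, same Rouch\'e localization, and the same structural observation that the governing matrix is unitary (your $(I-h_1)G(I-h_1)^{-1}$ is precisely the paper's $A$). The argument up to $\rho_{nq}=n+\omega_q+o(1)$, together with the global count via a large circle, is complete and matches the paper.

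The gap is exactly where you flag it: the sharp rate $O(n^{-1})$ when several $\omega_q$ coincide. Your Newton quotient $-\Delta_1/\Delta_0'$ only treats simple zeros of $\Delta_0$, and ``extract the rate from the symmetric functions of the cluster'' is a plan, not a proof; a naive product expansion around a $k$-fold root of $\Delta_0$ would give only $O(n^{-1/k})$. The paper closes this cleanly. Since $A$ is unitary, write $A=W^\dagger\mathcal{M}W$ with $\mathcal{M}$ diagonal and $W$ unitary; substituting $\rho_{nq}=n+\omega_q+\varepsilon_{nq}$ into the characteristic determinant reduces the equation $\Delta(\rho_{nq})=0$ to
\[
\det\bigl(\mu_q e^{2\pi i\varepsilon_{nq}}I-\mathcal{M}+O(n^{-1})\bigr)=0,
\]
which says that $z:=\mu_q e^{2\pi i\varepsilon_{nq}}$ is an eigenvalue of the diagonal matrix $\mathcal{M}$ perturbed by a matrix of norm $O(n^{-1})$. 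Because $\mathcal{M}$ is normal, the Bauer--Fike bound gives $\min_s|z-\mu_s|\le O(n^{-1})$ \emph{regardless of the multiplicity of} $\mu_q$; since $z\to\mu_q$, this forces $|z-\mu_q|=O(n^{-1})$ and hence $\varepsilon_{nq}=O(n^{-1})$. Diagonalizability (here from unitarity) is what blocks the generic $O(n^{-1/k})$ splitting of a $k$-fold eigenvalue. Your matrix $G$, being similar to a unitary, is likewise diagonalizable with a fixed eigenvector condition number, so the same argument is available to you once you invoke it instead of the simple-zero expansion.
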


\begin{proof}
It is clear that the eigenvalues of the pencil $L$ coincide with the zeros of the characteristic function
$\det V(\vv)$. This function is entire and has an at most countable set of zeros.
By \eqref{asymptphi},
\begin{multline} \label{asymptVphi}
V(\vv)=i\rho \Bigl\{\frac{1}{2}\exp(i \rho\pi) ( I+H_1) P_-(\pi)( I-h_1) \\
-\frac{1}{2}\exp(-i \rho\pi)( I-H_1)P_+(\pi)( I+h_1) \Bigr\}
+O\Bigl( \exp\bigl(|\tau|\pi\bigr)\Bigr),
\end{multline}
\begin{multline*}
\det V(\vv)=\left( \frac{i \rho}{2}\right)^m \det P_-(\pi)\det\left( I+H_1\right)\det\bigl(\exp(i \rho\pi)I-\exp(-i \rho \pi) A\bigr) \\
+O\Bigl(|\rho|^{m-1}\exp\bigl(m |\tau|\pi\bigr)\Bigr),\quad |\rho|\rightarrow\infty,
\end{multline*}
where
\begin{equation} \label{defA}
A=P_-^{-1}(\pi)\left( I+H_1\right) ^{-1}\left( I-H_1\right) P_+(\pi)\left( I+h_1\right)\left( I-h_1\right)^{-1}.
\end{equation}

It follows from \eqref{propP} and (II), that $\det P_-(\pi)$, $\det\left( I+H_1\right)$ and $\det\left( I-H_1\right)$ are nonzero. Therefore the eigenvalues coincide with the zeros of the analytic function of the form
\begin{equation}\label{initDR}
\Delta(\rho):=\det\Bigl(\exp(i \rho\pi)I-\exp(-i \rho \pi)A+O\bigl(|\rho|^{-1}\exp(|\tau|\pi)\bigr)\Bigr),\quad |\rho|\rightarrow\infty.
\end{equation}

The function $\Delta_0(\rho):=\det\bigl(\exp(i \rho\pi)I-\exp(- i \rho \pi)A\bigr)$ has zeros
\begin{equation*}
\rho_{nq}^0=n+\om_q,\quad n \in \mathbb{Z},\quad q=\overline{1,m},
\end{equation*}
where $w_q=\frac{1}{2\pi i}\ln\mu_q$, $\mu_q$ are the eigenvalues of the matrix $A$.

Since by \eqref{propP} and (II) the matrices $P_-(\pi)$, $P_+(\pi)$,
$\left( I+H_1\right)^{-1}\left( I-H_1\right)$ and $\left( I+h_1\right)\left( I-h_1\right)^{-1}$
are unitary, the matrix $A$ is also unitary. Its eigenvalues are simple and have the form
$\mu_q=e^{i\vv_q}$, where $\vv_q$ can be chosen in the interval $[0,2\pi)$.
Consequently, $\om_q=\frac{\vv_q}{2\pi}\in[0,1)$.

Consider the circles $\ga_{nq}$ with the centers $\rho_{nq}^0$ and radii equal to $\delta$, such that
\begin{equation*}
0<\delta\leq\frac{1}{2}\inf\left|\rho_{nq}^0-\rho_{mp}^0\right|,
\end{equation*}
where the infimum is taken over all $n,k\in \mathbb{Z}$, $1\leq q,p\leq m$, such that $\rho_{nq}^0\not=\rho_{mp}^0$.

Note that $\left|\Delta_0(\rho)\right|$ is bounded from below for
$\rho\in\ga_{nq},\ n\in \mathbb{Z},\ q=\overline{1,m}$. Furthermore,
$\left|\Delta(\rho)-\Delta_0(\rho)\right|=O(n^{-1})$ yields
$\left|\Delta(\rho)-\Delta_0(\rho)\right|\leq\left|\Delta_0(\rho)\right|$,
for $\rho\in\ga_{nq},\ n\in \mathbb{Z},\ q=\overline{1,m}$ for sufficiently large $|n|$.
Therefore we can apply Rouche's theorem to the analytic functions
$\Delta_0(\rho)$ and $\Delta(\rho)-\Delta_0(\rho)$ in the circle $\ga_{nq}$ and
conclude that $\Delta(\rho)$ has the same number of zeros as $\Delta_0(\rho)$ in this circle.
As $\delta$ tends to zero, we get
\begin{equation}\label{initR}
\rho_{nq}=\rho_{nq}^0+\om_q+\ee_{nq},\quad \ee_{nq}=o(1),\quad |n|\to\infty,\quad q=\overline{1,m}.
\end{equation}

Since the matrix $A$ is unitary, it can be represented in the form
\begin{equation}\label{initAWM}
A=W^\dagger \mathcal{M} W,\quad W^\dagger=W^{-1},\quad \mathcal{M} = \mbox{diag} \, \{\mu_1,\mu_2,\dots,\mu_m\}.
\end{equation}
Substituting  \eqref{initR} and \eqref{initAWM} into \eqref{initDR}, we get
\begin{equation*}
\det\Bigl( \mu_q\exp(2 \pi i\ee_{nq}) I - \mathcal{M}+O\left(n^{-1}\right)\Bigr)=0.
\end{equation*}
Since $\mathcal{M}$ is diagonal, we conclude that
\begin{equation} \label{esteps}
	\ee_{nq}=O\left(n^{-1}\right), \quad |n|\rightarrow\infty.
\end{equation}

Consider the contour
$\Gamma_R=\{\rho\colon|\rho|=R\}$ with a sufficiently large $R$,
that does not contain points $\rho_{nq}^0$.
Apply Rouche's theorem to it to the functions
$(i\rho)^m\Delta(\rho)$ and $(i\rho)^m\Delta_0(\rho)$.
As a result, $\det V(\vv)$ has the same number of zeros as
$(i\rho)^m\Delta_0(\rho)$ inside this contour, and together with \eqref{initR} and \eqref{esteps}
this yields the assertion of the Lemma.
\end{proof}

\begin{lem}\label{lem:symeq}
The following relation holds
\begin{equation} \label{symeq}
\int\limits_0^{\pi }\vv^\dagger(x,\rho_{nq})\vv(x,\rho_{nq})\,dx=\frac{\pi}{2}\left(I+h_1^\dagger h_1\right)+O\left(n^{-1}\right),\quad |n|\rightarrow\infty,\quad q=\overline{1,m}.
\end{equation}
Note that $\left(I+h_1^\dagger h_1\right)\ge 0$.
\end{lem}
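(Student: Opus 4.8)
The plan is to substitute the asymptotic representation \eqref{asymptphi} with $\nu = 0$ into the integrand and keep track of the terms that survive after integration over $[0, \pi]$. First I would record, from condition (II), that $h_1^\dagger = -h_1$, whence $(I - h_1)^\dagger = I + h_1$ and $(I + h_1)^\dagger = I - h_1$, and, from \eqref{propP}, that $P_\pm^\dagger(x) = P_\pm^{-1}(x)$. Taking the conjugate transpose of \eqref{asymptphi} then gives
\[
\vv^\dagger(x, \rho) = \tfrac{1}{2}\,\overline{\exp(i\rho x)}\,(I + h_1) P_-^{-1}(x) + \tfrac{1}{2}\,\overline{\exp(-i\rho x)}\,(I - h_1) P_+^{-1}(x) + O\bigl(\rho^{-1}\exp(|\tau| x)\bigr).
\]
A crucial preliminary observation is that, by Lemma~\ref{lem:asymptrho}, the imaginary part $\tau = \mathrm{Im}\,\rho_{nq}$ is $O(n^{-1})$, so that the exponential factors $\exp(\pm 2\tau x)$ equal $1 + O(n^{-1})$ uniformly on $[0, \pi]$ and the $O(\rho^{-1}\exp(|\tau| x))$ remainders are genuinely $O(n^{-1})$.

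Next I would multiply $\vv^\dagger(x, \rho_{nq})$ by $\vv(x, \rho_{nq})$ and expand into four products. The two \emph{diagonal} products pair $\exp(\pm i\rho x)$ with $\overline{\exp(\pm i\rho x)}$, producing the factor $\exp(\mp 2\tau x) = 1 + O(n^{-1})$ together with $P_\pm^{-1}(x) P_\pm(x) = I$; they reduce to the $x$-independent matrices $\tfrac14(I + h_1)(I - h_1)$ and $\tfrac14(I - h_1)(I + h_1)$, each equal to $\tfrac14(I - h_1^2)$. Integrating over $[0, \pi]$ turns their sum into $\tfrac{\pi}{2}(I - h_1^2)$, and using $h_1^\dagger h_1 = (-h_1)h_1 = -h_1^2$ this is exactly $\tfrac{\pi}{2}(I + h_1^\dagger h_1)$.

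The two remaining \emph{off-diagonal} products carry the oscillatory factors $\exp(\pm 2 i (\mathrm{Re}\,\rho_{nq}) x)$ multiplying matrix functions built from $P_\pm(x)$; here I would use that $Q_1 \in W_1^1$ makes $P_\pm$, and hence these products, absolutely continuous, so integration by parts in $x$ bounds each integral by $O((\mathrm{Re}\,\rho_{nq})^{-1}) = O(n^{-1})$. The cross terms in which the $O(\rho^{-1})$ remainder multiplies a leading term contribute $O(n^{-1})$ as well, since the surviving exponential factors are bounded. Collecting the diagonal, oscillatory, and remainder contributions yields \eqref{symeq}. The final assertion is immediate: for every $v \in \mathbb{C}^m$ one has $v^\dagger(I + h_1^\dagger h_1)v = v^\dagger v + (h_1 v)^\dagger(h_1 v) \ge 0$, so $I + h_1^\dagger h_1 \ge 0$.

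The main obstacle I anticipate is the bookkeeping of the error terms rather than any single identity: one must check that the oscillatory integrals really decay like $n^{-1}$ (which rests on the $W_1^1$-smoothness of $Q_1$ and hence of $P_\pm$), and that the products of leading terms with the $O(\rho^{-1})$ remainder, together with the deviation of $\exp(\pm 2\tau x)$ from $1$, are all absorbed into a single $O(n^{-1})$ bound uniformly in $q = \overline{1, m}$.
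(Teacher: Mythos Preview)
Your proposal is correct and follows essentially the same approach as the paper: expand $\vv^\dagger\vv$ via \eqref{asymptphi}, use \eqref{propP} to collapse the diagonal products to the constant matrix $\tfrac12(I+h_1^\dagger h_1)$, and kill the oscillatory cross terms by integration by parts. You are in fact slightly more careful than the paper on one point: the paper writes the conjugate of $\exp(i\rho x)$ as $\exp(-i\rho x)$ without comment, whereas you explicitly invoke $\mathrm{Im}\,\rho_{nq}=O(n^{-1})$ from Lemma~\ref{lem:asymptrho} to justify treating $\rho_{nq}$ as real up to an $O(n^{-1})$ error.
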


\begin{proof}
We will use the notation $[I] = I + O(|\rho|^{-1})$. According to \eqref{asymptphi},
\begin{multline*}
\vv^\dagger(x,\rho)\vv(x,\rho)=\frac{1}{4}\biggl( \exp (-i\rho x)\left(I-h_1^\dagger\right)P_-^\dagger(x)[I]+\exp (i\rho x)\left(I+h_1^\dagger\right)P_+^\dagger(x)[I]\biggr)\\
\cdot\bigl(\exp (i\rho x)P_-(x)\left(I-h_1\right)[I]+\exp (-i\rho x)P_+(x)\left(I+h_1\right)[I] \bigr).
\end{multline*}
Using \eqref{propP}, we derive
\begin{multline} \label{smeq1}
\vv^\dagger(x,\rho)\vv(x,\rho)
=\frac{1}{2}\left(I+h_1^\dagger h_1\right)[I]+\frac{1}{4}\exp (2i\rho x)\left(I-h_1\right)P_+^\dagger(x)P_-(x)\left(I-h_1\right)[I] \\
+\frac{1}{4}\exp (-2i\rho x)\left(I+h_1\right)P_-^\dagger(x)P_+(x)\left(I+h_1\right)[I].
\end{multline}
Integrating by parts, we obtain
\begin{multline*}
\int\limits_0^{\pi }\exp (2i\rho\pi)P_+^\dagger(x)P_-(x)\,dx=\left.\frac{1}{2i\rho}\exp (2i\rho x)P_+^\dagger(x)P_-(x)\right|_0^\pi\\
-\frac{1}{2i\rho}\int\limits_0^{\pi }\exp (2i\rho x)P_+^\dagger(x)Q_1(x)P_-(x)\,dx=O\left(n^{-1}\right),\quad\rho=\rho_{nq},
\end{multline*}
and analogously,
\begin{equation*}
\int\limits_0^{\pi }\exp (-2i\rho x)P_-^\dagger(x)P_+(x)\,dx=O\left(n^{-1}\right),\quad\rho=\rho_{nq}.
\end{equation*}
Now it is clear that \eqref{symeq} follows from \eqref{smeq1}.
\end{proof}

\begin{lem} \label{lem:real}
Starting from some number $N$, all the eigenvalues $\rho_{nq}$, $|n| \ge N$, $q = \overline{1, m}$, are real.
\end{lem}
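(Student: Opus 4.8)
The plan is to attach to each eigenvalue a scalar quadratic equation in $\rho$ with \emph{real} coefficients and then use the asymptotics of Lemma~\ref{lem:asymptrho} together with Lemma~\ref{lem:symeq} to exclude nonreal roots for large $|n|$. Since the eigenvalues are the zeros of $\det V(\vv)$, for $\rho = \rho_{nq}$ there is a nonzero $v \in \mathbb{C}^m$ with $V(\vv(\cdot, \rho_{nq}))\,v = 0$; set $Y(x) := \vv(x, \rho_{nq})\,v$. Then $\ell Y = 0$, both boundary conditions $U(Y) = V(Y) = 0$ hold, and $Y(0) = v \ne 0$. Normalizing $\|v\| = 1$, I would multiply $\ell Y = 0$ on the left by $Y^\dagger$, integrate over $[0, \pi]$, integrate $Y^\dagger Y''$ by parts, and eliminate $Y'(0)$, $Y'(\pi)$ via \eqref{BC}. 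This produces a scalar relation $a \rho^2 + b \rho + c = 0$ with
\begin{gather*}
a = \int_0^\pi Y^\dagger Y \, dx, \qquad b = -i Y^\dagger(\pi) H_1 Y(\pi) + i Y^\dagger(0) h_1 Y(0) + 2i \int_0^\pi Y^\dagger Q_1 Y \, dx, \\
c = -Y^\dagger(\pi) H_0 Y(\pi) + Y^\dagger(0) h_0 Y(0) - \int_0^\pi (Y')^\dagger Y' \, dx + \int_0^\pi Y^\dagger Q_0 Y \, dx.
\end{gather*}

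The next step is to verify that $a, b, c$ are real. Clearly $a > 0$. For $b$, condition (II) makes $h_1$, $H_1$ and $Q_1(x)$ skew-Hermitian, so each of $Y^\dagger(\pi) H_1 Y(\pi)$, $Y^\dagger(0) h_1 Y(0)$ and $\int_0^\pi Y^\dagger Q_1 Y \, dx$ is purely imaginary, and multiplication by $\mp i$ makes every summand real. For $c$, condition (II) makes $h_0$, $H_0$ and $Q_0(x)$ Hermitian, so the boundary forms and $\int_0^\pi Y^\dagger Q_0 Y \, dx$ are real, as is $\int_0^\pi (Y')^\dagger Y' \, dx$. Thus $a \rho^2 + b \rho + c = 0$ has real coefficients with $a > 0$. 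Consequently, if $\rho_{nq}$ were nonreal, then $\overline{\rho_{nq}}$ would be the second root of the \emph{same} quadratic, and Vieta's relation would give $\rho_{nq} + \overline{\rho_{nq}} = -b/a$, i.e. $2\,\mathrm{Re}\,\rho_{nq} = -b/a$.

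It remains to show this is impossible for large $|n|$. By Lemma~\ref{lem:asymptrho} we have $\tau = \mathrm{Im}\,\rho_{nq} = O(n^{-1})$, so $\exp(\pm i \rho_{nq} x)$ stays bounded and \eqref{asymptphi} gives $Y(x) = O(1)$ uniformly on $[0,\pi]$; since $b$ involves only $Y$ (not $Y'$), this yields $b = O(1)$ uniformly in $v$. On the other hand, Lemma~\ref{lem:symeq} gives $a = \tfrac{\pi}{2}\,v^\dagger (I + h_1^\dagger h_1) v + O(n^{-1}) \ge \tfrac{\pi}{2} - O(n^{-1})$, because $I + h_1^\dagger h_1 \ge I$; hence $a$ is bounded below by a positive constant for large $|n|$. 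Therefore $-b/a = O(1)$, while $2\,\mathrm{Re}\,\rho_{nq} = 2n + 2\om_q + O(n^{-1}) \to \pm\infty$, a contradiction, so $\rho_{nq}$ is real once $|n| \ge N$. I expect the main difficulty to be the careful bookkeeping in the first step — assembling the quadratic so that the coefficients come out manifestly real and keeping all estimates uniform in the (generally $n$-dependent) eigenvector $v$; the reality of the coefficients is the conceptual heart and is precisely where conditions (II) enter, while Lemma~\ref{lem:symeq} supplies the decisive uniform lower bound on $a$.
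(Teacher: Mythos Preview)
Your proof is correct and follows essentially the same route as the paper: both form the scalar identity $\int_0^\pi Y^\dagger(\ell Y)\,dx=0$, integrate by parts using the boundary conditions, exploit condition~(II) to see that the resulting quadratic in $\rho$ has real coefficients (the paper phrases this as taking the imaginary part of $\mathcal{S}[Y]$, which yields the same relation $2\,\mathrm{Re}\,\rho_{nq}=-b/a$), and then invoke Lemma~\ref{lem:symeq} for the uniform lower bound on $a$ together with Lemma~\ref{lem:asymptrho} for the contradiction. Your Vieta packaging is a cosmetic variant of the paper's imaginary-part computation.
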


\begin{proof}
Let $\rho_0$ be an eigenvalue and $Y(x,\rho_0)$ be a corresponding eigenvector for the pencil $L$.
Consider the scalar product in $L_2((0, \pi), \mathbb{C}^m)$:
\begin{equation*}
 \mathcal{S}[Y] := \Bigl( Y,Y^{\prime\prime}+\left(\rho_0^2I+2i\rho_0Q_1+Q_0\right)Y\Bigr)=
\left(Y,Y^{\prime\prime}\right)+\rho_0^2(Y,Y)+2i\rho_0(Y,Q_1Y)+(Y,Q_0Y)=0.
\end{equation*}
Integrating by parts, we get
\begin{equation*}
\left(Y,Y^{\prime\prime}\right)=-Y^\dagger(\pi)(i\rho_0H_1+H_0)Y(\pi)+Y^\dagger(0)(i\rho_0h_1+h_0)Y(0)-\left(Y^{\prime},Y^{\prime}\right).
\end{equation*}

Every eigenvector can be represented in the form
\begin{equation} \label{repeigen}
Y(x, \rho_0)=\vv(x,\rho_0)C_\vv(\rho_0),
\end{equation}
where  $C_\vv(\rho_0)$ is a constant column vector, it can be chosen so that $\|C_\vv(\rho_0)\|=1$.

It follows from \eqref{asymptphi} and \eqref{repeigen}, that
when $|\rho_0|\rightarrow\infty$ by the set of eigenvalues,
\begin{equation} \label{asymptY}
	Y(x,\rho_0)=O(1),
\end{equation}
uniformly with respect to $x \in [0, \pi]$.

Let $\rho_0=\sigma+i\tau,\ \tau\not=0$.
Then
\begin{equation*}
\mbox{Im}\, \mathcal{S}[Y] = 2\sigma\tau(Y,Y)+2\tau(Y,Q_1Y)-\tau Y^\dagger(\pi)i H_1Y(\pi)+\tau Y^\dagger(0)i h_1Y(0)=0,
\end{equation*}
\begin{equation} \label{formsigma}
\sigma=-\frac{2(Y,Q_1Y)-Y^\dagger(\pi)i H_1Y(\pi)+Y^\dagger(0)i h_1Y(0)}{(Y,Y)}.
\end{equation}
It follows from \eqref{asymptY}, that the numerator of this fraction is bounded. Lemma~\ref{lem:symeq}
helps us to prove that the denominator  is bounded from below. Indeed, by positiveness of
the matrix $(I + h_1^{\dagger} h_1)$, we have
$$
 	(X, (I + h_1^{\dagger} h_1) X) \ge \al \| X \|^2, \quad \al > 0.
$$
Hence
$$
	(Y, Y) = \displaystyle\int\limits_0^{\pi }C_\vv^\dagger\vv^\dagger(x,\rho_0)\vv(x,\rho)C_\vv\,dx\ge\al\|C_\vv\|^2
$$
for sufficiently large $|\rho_0|$.

In fact, we have proved that the real parts $\sigma$ of eigenvalues are bounded as $|\rho_0| \to \infty$.
But this contradicts the asymptics from Lemma~\ref{lem:asymptrho}.
Thus, $\tau = 0$ for all eigenvalues, sufficiently large by their absolute value.
\end{proof}

\begin{remark} \label{rem:real}
Suppose
\begin{equation*}
  F[Y] := (Y', Y') - (Y, Q_0 Y) + Y^{\dagger} (\pi) H_0 Y(\pi) - Y^{\dagger}(0) h_0 Y(0) \ge 0
\end{equation*}
for all eigenvectors $Y = Y(x, \rho_0)$. For example, this holds when $Q_0(x) \le 0$, $H_0 \ge 0$,
$h_0 \le 0$.

Let $G[Y]$ be the numerator of the fraction in \eqref{formsigma}. Then
$$
 	\mbox{Re}\, \mathcal{S}[Y] = (\sigma^2 - \tau^2)(Y, Y) + \sigma G[Y] - F[Y] = 0.
$$
Note that by \eqref{formsigma}, $\sigma G[Y] = -2 \sigma^2 (Y, Y)$. Hence
$$
 	F[Y] = -(\sigma^2 + \tau^2)(Y, Y) \le 0.
$$
We conclude $\sigma^2 + \tau^2 = 0$, so in this special case, {\it all eigenvalues are real}.
Moreover, we note that \eqref{formsigma} can hold only for $\rho_0 = 0$.
\end{remark}

It is easy to check that
\begin{equation} \label{Phiexp}
    \Phi(x, \rho) = S(x, \rho) + \vv(x, \rho) M(\rho),
\end{equation}
\begin{equation} \label{Mexp}
    M(\rho) = -(V(\vv))^{-1} V(S).
\end{equation}
Consequently, $M(\rho)$ is a meromorphic function and its poles coincide with the zeros of $\det V(\vv)$,
i.e. the eigenvalues $\{ \rho_{nq} \}$.

\begin{lem}\label{lem:simplepoles}
The poles of the matrix function $M(\rho)$ at $\rho=\rho_{nq}$ for sufficiently large $|n|$ are simple.
\end{lem}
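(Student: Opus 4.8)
The plan is to reduce the simplicity of a pole of $M(\rho)$ to the nonexistence of an associated function (a Jordan chain of length $\ge 2$) for the pencil, and then to exclude such chains for large $|n|$ by a Lagrange-type identity built on the self-adjointness condition~(II). Fix an eigenvalue $\rho_0=\rho_{nq}$ with $|n|$ large; by Lemma~\ref{lem:real} it is real. Since $\ell\Phi=0$, $U(\Phi)=I$, $V(\Phi)=0$, $M(\rho)=\Phi(0,\rho)$, and the coefficients of $\ell,U,V$ are polynomial in $\rho$ (quadratic in $\ell$, affine in $U,V$), substituting a Laurent expansion $\Phi(x,\rho)=\sum_{k=1}^{p}\Phi_{-k}(x)(\rho-\rho_0)^{-k}+O(1)$ and collecting the two most singular powers shows that a pole of order $p\ge 2$ yields a nonzero eigenfunction $Y_0:=\Phi_{-p}$ and an associated function $Y_1:=\Phi_{-p+1}$ satisfying
\begin{equation*}
\ell_{\rho_0}Y_0=0,\quad U_{\rho_0}(Y_0)=0,\quad V_{\rho_0}(Y_0)=0,
\end{equation*}
\begin{equation*}
\ell_{\rho_0}Y_1+(2\rho_0 I+2iQ_1)Y_0=0,\quad U_{\rho_0}(Y_1)+ih_1Y_0(0)=0,\quad V_{\rho_0}(Y_1)+iH_1Y_0(\pi)=0,
\end{equation*}
where $\ell_{\rho_0},U_{\rho_0},V_{\rho_0}$ denote the forms at $\rho=\rho_0$. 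It therefore suffices to show that, for $|n|$ large, these relations are inconsistent.

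Next I would compute $(Y_0,\ell_{\rho_0}Y_1)$ by integrating by parts twice. Because $\rho_0$ is real, condition~(II) makes $\ell_{\rho_0}$ formally self-adjoint (indeed $(\rho_0^2 I+2i\rho_0 Q_1+Q_0)^\dagger=\rho_0^2 I+2i\rho_0 Q_1+Q_0$), so the volume terms cancel and only the Wronskian-type boundary term $\bigl[Y_0^\dagger Y_1'-(Y_0')^\dagger Y_1\bigr]_0^\pi$ survives. Evaluating it with the boundary conditions above --- using $Y_0'(0)=-(i\rho_0h_1+h_0)Y_0(0)$, $Y_1'(0)=-(i\rho_0h_1+h_0)Y_1(0)-ih_1Y_0(0)$ and the analogous relations at $x=\pi$, together with $(i\rho_0 h_1+h_0)^\dagger=i\rho_0 h_1+h_0$ --- the terms containing $Y_1$ cancel and one is left with a contribution in $Y_0$ alone. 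Substituting the associated-function equation $\ell_{\rho_0}Y_1=-(2\rho_0 I+2iQ_1)Y_0$ on the other side then gives the scalar identity
\begin{equation*}
2\rho_0(Y_0,Y_0)=-2i(Y_0,Q_1Y_0)+Y_0^\dagger(\pi)iH_1Y_0(\pi)-Y_0^\dagger(0)ih_1Y_0(0).
\end{equation*}
By~(II) the right-hand side is real, as it must be.

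Finally I would derive a contradiction from this identity. Writing $Y_0=\vv(x,\rho_0)C_\vv$ as in \eqref{repeigen} with $\|C_\vv\|=1$ (the identity is homogeneous of degree two in $Y_0$, so the normalization is free), the estimate \eqref{asymptY} gives $Y_0(x,\rho_0)=O(1)$ uniformly in $x$, whence the right-hand side above stays bounded as $|n|\to\infty$. On the other hand, exactly as in the proof of Lemma~\ref{lem:real}, Lemma~\ref{lem:symeq} and the positivity of $I+h_1^\dagger h_1$ give $(Y_0,Y_0)=\int_0^\pi C_\vv^\dagger\vv^\dagger(x,\rho_0)\vv(x,\rho_0)C_\vv\,dx\ge\al\|C_\vv\|^2=\al>0$ for $|n|$ large. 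Since $|\rho_0|=|\rho_{nq}|\to\infty$ by Lemma~\ref{lem:asymptrho}, the left-hand side $2\rho_0(Y_0,Y_0)$ is unbounded in modulus --- a contradiction. Hence for $|n|$ large no associated function exists, so $p=1$ and the poles at $\rho_{nq}$ are simple. I expect the main obstacle to be the bookkeeping in the second paragraph: one must correctly incorporate the $\rho$-dependence of \emph{both} boundary conditions into the Jordan-chain relations for $Y_1$ and verify that, under~(II) and for real $\rho_0$, all $Y_1$-terms drop out, leaving only the controllable quadratic form in $Y_0$.
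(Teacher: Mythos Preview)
Your argument is correct. The only notational slip is that $\Phi_{-p}$ is an $m\times m$ matrix, not a vector; what you really use is $Y_0=\Phi_{-p}c$, $Y_1=\Phi_{-p+1}c$ for some constant vector $c$ with $\Phi_{-p}c\ne 0$ (such $c$ exists since $\Phi_{-p}(0)=M_{-p}\ne 0$ by \eqref{Phiexp}). With this understood, your Lagrange-identity computation and the ensuing contradiction are valid.

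Your route differs from the paper's. The paper works with $(V(\vv))^{-1}$ rather than $M(\rho)$ directly: it invokes the Agranovich--Marchenko criterion that a non-simple pole of $(V(\vv))^{-1}$ forces vectors $a,b$ with $V(\vv)a=0$ and $\tfrac{d}{d\rho}V(\vv)\,a+V(\vv)b=0$, then derives an explicit formula for $\tfrac{d}{d\rho}V(\vv)$ via an auxiliary solution $\psi$ normalized at $x=\pi$ and the relation $(U(\psi))^\dagger=-V(\vv)$. Pairing with the eigenvector produces exactly the same quadratic form $2\rho_0(Y,Y)+2i(Y,Q_1Y)+\text{(boundary terms in }h_1,H_1\text{)}$ that you obtain, and the contradiction is the same one: this quantity is unbounded while the complementary side vanishes. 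Your approach is more self-contained --- it dispenses with the cited criterion and the solution $\psi$, extracting the Jordan chain straight from the Laurent expansion of $\Phi$ and letting the $Y_1$-terms cancel by the formal self-adjointness under~(II). The paper's approach, on the other hand, yields the slightly stronger statement that $(V(\vv))^{-1}$ itself has simple poles. Either way, the heart of the matter is the identity you display, bounded on the right by \eqref{asymptY} and unbounded on the left by Lemma~\ref{lem:symeq} and Lemma~\ref{lem:asymptrho}.
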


\begin{proof}

We will prove that the matrix function $(V(\vv))^{-1}$ has only simple poles at
sufficiently large eigenvalues, using the following well-known fact (see \cite[Lemma 2.2.1]{AM60}):

\smallskip

{\it
If there do not exist two nonzero vectors $a$ and $b$, such that
\begin{equation} \label{lemAM}
\begin{array}{c}
V(\vv)a=0, \\
\frac{d}{d\rho}V(\vv)a+V(\vv)b=0,
\end{array}
\end{equation}
at some point $\rho_0$, then $\rho_0$ is a simple pole of $(V(\vv))^{-1}$
}

\smallskip

In order to use this fact, we derive a formula for $\frac{d}{d \rho} V(\vv)$.

In this proof, we assume that $\rho$ is real. Let $\psi(x, \rho) = [\psi_{jk}(x, \rho)]_{j, k = \overline{1, m}}$
be a matrix solution of equation \eqref{eqv} under the initial conditions
\begin{equation*}
\psi(\pi,\rho)=I, \quad V(\psi)=0.
\end{equation*}
In view of (II), $\psi^{\dagger}(x, \rho)$ is a solution of the following equation
\begin{equation} \label{eqvZ}
 	Z'' + Z (\rho^2 I + 2 i \rho Q_1(x) + Q_0(x)) = 0,
\end{equation}
and consequently, the Wronskian $\langle\psi^\dagger(x,\rho),\vv(x,\rho)\rangle$ does not depend on $x$.
Therefore we calculate
\begin{equation*}
\langle\psi^\dagger(x,\rho),\vv(x,\rho)\rangle=\langle\psi^\dagger(x,\rho),\vv(x,\rho)\rangle_{\left|x=0\right.}=\bigl(U(\psi)\bigr)^\dagger,
\end{equation*}
\begin{equation*}
\langle\psi^\dagger(x,\rho),\vv(x,\rho)\rangle=\langle\psi^\dagger(x,\rho),\vv(x,\rho)\rangle_{\left|x=\pi\right.}=-V(\vv),
\end{equation*}
\begin{equation}\label{UV}
    \bigl(U(\psi)\bigr)^\dagger =-V(\vv).
\end{equation}

Let $\rho_0$ be a real eigenvalue of $L$. Then
\begin{equation*}
\frac{d}{dx}\langle\psi^\dagger(x,\rho),\vv(x,\rho_0)\rangle=(\rho_0-\rho)\psi^\dagger(x,\rho)\bigl((\rho+\rho_0)I+2i Q_1(x)\bigr)\vv(x,\rho_0),
\end{equation*}
\vspace{-1cm}
\begin{multline*}
\left.\langle\psi^\dagger(x,\rho),\vv(x,\rho_0)\rangle\right|_0^\pi=(\rho_0-\rho)H_1\vv(\pi,\rho_0)\\
-V\bigl(\vv(x,\rho_0)\bigr)-\Bigl(U\bigl(\psi(x,\rho)\bigr)\Bigr)^\dagger -(\rho_0-\rho)\psi^\dagger(0,\rho)h_1.
\end{multline*}

By \eqref{UV} we have
\begin{multline*}
V\bigl(\vv(x,\rho)\bigr)-V\bigl(\vv(x,\rho_0)\bigr)=(\rho_0-\rho)\Biggl[\psi^\dagger(0,\rho)h_1-H_1\vv(\pi,\rho_0)\\
+\int\limits_0^{\pi }\psi^\dagger(x, \rho)\bigl((\rho+\rho_0)I+2i Q_1(x)\bigr)\vv(x,\rho_0)\,dx\Biggr].
\end{multline*}
Taking a limit as $\rho$ tends to $\rho_0$, we get
\begin{equation} \label{Vderiv}
\frac{d}{d\rho}V(\vv)_{|\rho=\rho_0} =-\Biggl[\psi^\dagger(0,\rho_0)h_1-H_1\vv(\pi,\rho_0)+
2\int\limits_0^{\pi }\psi^\dagger(x,\rho_0)\bigl(\rho_0 I +i Q_1(x)\bigr)\vv(x,\rho_0)\,dx\Biggr].
\end{equation}

Every eigenvector $Y(x, \rho_0)$, corresponding to the eigenvalue $\rho_0$, has the form
\begin{equation*}
Y(x, \rho_0) =\vv(x,\rho_0)C_\vv=\psi(x,\rho_0)C_\psi,
\end{equation*}
where $C_{\vv}$ and $C_{\psi}$ are some constant vectors. We can assume that $\| C_{\vv} \| = 1$.
Using \eqref{Vderiv}, we obtain
\begin{multline} \label{smeq2}
C_\psi^\dagger\frac{d}{d\rho}V(\vv)_{|\rho=\rho_0}C_\vv=-\Biggl[Y^\dagger(0,\rho_0)h_1C_\vv-C_\psi^\dagger H_1Y(\pi,\rho_0)+\\
+ 2\int\limits_0^{\pi }Y^\dagger(x,\rho)i Q_1Y(x,\rho)\,dx+2\rho_0\int\limits_0^{\pi }C_\vv^\dagger\vv^\dagger(x,\rho_0)\vv(x,\rho_0)C_\vv\,dx\Biggr]
\end{multline}
If $\|C_\vv\|=1$, then $Y(x,\rho_0)=O(1),\ |\rho_0|\rightarrow\infty$ and $C_\psi=\vv(\pi,\rho_0)C_\vv$ is also bounded.
By Lemma~\ref{lem:symeq}, the integral in the last term is positive and bounded from below for sufficiently large $|\rho_0|$.
Therefore,
$C_\psi^\dagger\frac{d}{d\rho}V(\vv)C_\vv$ tends to $\pm\infty$
as $\rho_0=\rho_{n q},\ n\rightarrow\pm\infty$.

If $a$ and $b$ satisfy \eqref{lemAM}, then $a$ gives an eigenvector, i.e. $a = C_{\vv}$.
On one hand, from the second relation of \eqref{lemAM} we get
$$
  C_\psi^\dagger V(\vv)b = -C_\psi^\dagger\frac{d}{d\rho}V(\vv)C_\vv,
$$
that is unbounded for sufficiently large $|\rho_0|$.
On the other hand, by \eqref{UV} we have $C_\psi^\dagger V(\vv)=-C_\psi^\dagger\bigr(U(\psi)\bigl)^\dagger=0$.
We arrive at a contradiction, so for sufficiently large $|\rho_0|$, the vectors $a$ and $b$ in \eqref{lemAM} must be zero and
the poles of $(V(\vv))^{-1}$ are simple.
\end{proof}

\begin{remark} \label{rem:simple}
Consider the case from Remark~\ref{rem:real}. Note that if $\rho_0$ is real, then
the right hand side of \eqref{smeq2} equals zero if and only if \eqref{formsigma} holds.
But according to Remark~\ref{rem:real}, \eqref{formsigma} can be valid only for $\rho_0 = 0$.
Moreover, if $F[Y] > 0$ for all eigenvectors belonging to the zero eigenvalue, then \eqref{formsigma}
never holds. Following the proof of Lemma~\ref{lem:simplepoles}, we conclude that in this special case
all the poles of $M(\rho)$ are simple (if $F[Y]$ can be zero for $\rho_0 = 0$, then all the poles except
$\rho_0 = 0$ are simple).
\end{remark}

Introduce the notation: $J_q=\{ s\colon \om_s=\om_q\}$, $I_q=[I_{q,j k}]_{j, k=\overline{1,m}}$,
\begin{equation*}
 I_{q,j k}=
\begin{cases}
1, &j=k\ \text{and}\ \om_j\in J_q; \\
0,  &\text{otherwise}.
\end{cases}
\end{equation*}

Let $m_{n q}$ be the multiplicity of the eigenvalue $\rho_{n q}$
(as a zero of the analytic function $\det V(\vv)$).
We consider the data $\al_{n q} :=\Res\limits_{\rho=\rho_{n q}}M(\rho)$ and $m_{n q}$ only for sufficiently large $|n|$,
for which the assertions of Lemmas~\ref{lem:asymptrho},\ref{lem:real} and \ref{lem:simplepoles} hold,
so here we do not deal with non-simple poles.

\begin{lem}
The ranks of the weight matrices $\al_{nq}$ are equal to $m_{nq}$ for all such $n, q$ that the corresponding poles of
$M(\rho)$ are simple.
\end{lem}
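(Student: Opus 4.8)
The plan is to reduce the statement to the single matrix $R:=\Res_{\rho=\rho_{nq}}(V(\vv))^{-1}$, and to lean on one auxiliary observation used twice. By \eqref{Mexp}, $M(\rho)=-(V(\vv))^{-1}V(S)$ with $V(S)$ entire, so the poles of $M$ come entirely from $(V(\vv))^{-1}$. The auxiliary observation I would prove first is a \emph{spanning property}: for each fixed value of the spectral parameter the columns of $V(S)$ and of $V(\vv)$ together span $\mathbb{C}^m$. Indeed, $S$ and $\vv$ form a fundamental system of \eqref{eqv}, so a common left annihilator, $w^\dagger V(S)=0$ and $w^\dagger V(\vv)=0$, would satisfy $w^\dagger V(Y)=0$ for every solution $Y=Sa+\vv b$; as $V(Y)=Y'(\pi)+(i\rho H_1+H_0)Y(\pi)$ runs over all of $\mathbb{C}^m$ when $Y$ varies (take $Y(\pi)=0$ and $Y'(\pi)$ arbitrary), this forces $w=0$.

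Put $V_0:=V(\vv)(\rho_{nq})$ and $V_1:=\frac{d}{d\rho}V(\vv)\big|_{\rho=\rho_{nq}}$. The spanning property first lets me transfer the simplicity of the pole from $M$ to $(V(\vv))^{-1}$: if $(V(\vv))^{-1}$ had a pole of order $k$ with leading coefficient $R_k\neq0$, then $V_0R_k=R_kV_0=0$ (top-order terms of $V(\vv)(V(\vv))^{-1}=I$ and of $(V(\vv))^{-1}V(\vv)=I$), so $\mathrm{Im}\,V_0\subseteq\ker R_k$; were the leading coefficient $-R_kV(S)(\rho_{nq})$ of $M$ zero, the spanning property would force $R_k=0$, a contradiction, so $M$ and $(V(\vv))^{-1}$ have the same pole order and $k=1$. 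For the simple pole I then expand $(V(\vv))^{-1}=R(\rho-\rho_{nq})^{-1}+T_0+\cdots$ and read off $V_0R=RV_0=0$ and, from the constant term $V_0T_0+V_1R=I$ together with $RV_0=0$, the relation $RV_1R=R$. Thus $P:=RV_1$ is idempotent with $\mathrm{rank}\,P=\mathrm{rank}\,R$. Since $\det V(\vv)$ vanishes to order $m_{nq}$ at $\rho_{nq}$, the logarithmic-derivative identity $\frac{d}{d\rho}\ln\det V(\vv)=\mathrm{tr}\big((V(\vv))^{-1}\frac{d}{d\rho}V(\vv)\big)$ has residue $m_{nq}$ there, while its right-hand side has residue $\mathrm{tr}(RV_1)=\mathrm{tr}\,P$. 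Hence $m_{nq}=\mathrm{tr}\,P=\mathrm{rank}\,P=\mathrm{rank}\,R$.

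It remains to pass from $R$ to $\al_{nq}=-R\,V(S)(\rho_{nq})$. In general $\mathrm{rank}\big(R\,V(S)(\rho_{nq})\big)=\mathrm{rank}\,R$ precisely when $\mathrm{Im}\,V(S)(\rho_{nq})+\ker R=\mathbb{C}^m$. Now $RV_0=0$ gives $\mathrm{Im}\,V(\vv)(\rho_{nq})=\mathrm{Im}\,V_0\subseteq\ker R$, so by the spanning property $\mathrm{Im}\,V(S)(\rho_{nq})+\ker R\supseteq\mathrm{Im}\,V(S)(\rho_{nq})+\mathrm{Im}\,V(\vv)(\rho_{nq})=\mathbb{C}^m$. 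Therefore $\mathrm{rank}\,\al_{nq}=\mathrm{rank}\,R=m_{nq}$, which is the assertion.

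I expect the crux to be this final passage. A priori, right multiplication of the residue $R$ by $V(S)(\rho_{nq})$ could lower the rank, and since $V(S)(\rho_{nq})$ need not be invertible at an eigenvalue one cannot argue by invertibility; the spanning property is the problem-specific device that rules this out (and it is also what makes simplicity of the pole of $M$ equivalent to simplicity of the pole of $(V(\vv))^{-1}$). The identity $\mathrm{rank}\,R=m_{nq}$ is, by contrast, the standard fact that the residue of the inverse of an analytic matrix function at a simple pole has rank equal to the multiplicity of the corresponding zero of the determinant, and could be cited instead of reproved.
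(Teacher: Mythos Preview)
Your argument is correct and self-contained. The paper itself does not give a proof of this lemma; it only remarks that the proof is similar to \cite[Lemma~4]{Bond11} for the matrix Sturm--Liouville operator, so there is no detailed argument to compare against. Your proof supplies exactly what that reference is presumably invoked for: the standard fact that the residue of the inverse of an analytic matrix function at a simple pole has rank equal to the vanishing order of the determinant (your trace/logarithmic-derivative computation with $P=RV_1$), together with the problem-specific observation that right multiplication by $V(S)(\rho_{nq})$ cannot drop the rank.

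Two points worth noting. First, your ``spanning property'' step does real work and is not merely cosmetic: in the paper's setting Lemma~\ref{lem:simplepoles} already establishes that $(V(\vv))^{-1}$ has simple poles for large $|n|$, so Step~2 of your argument is redundant there; but the lemma as stated covers \emph{all} $n,q$ for which the pole of $M$ is simple, and for those your transfer of simplicity from $M$ to $(V(\vv))^{-1}$ is genuinely needed. Second, your final passage is the crux, as you anticipated, and your handling of it via $\mathrm{Im}\,V_0\subseteq\ker R$ combined with the spanning of $\mathbb{C}^m$ by the columns of $V(S)$ and $V(\vv)$ is clean; this is precisely the ingredient that distinguishes the pencil setting (where $V(S)(\rho_{nq})$ need not be invertible) from situations in which one could simply invoke invertibility of the second factor.
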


The proof is similar to the one for the matrix Sturm-Liouville operator \cite[Lemma 4]{Bond11}.

In the following lemma, we split the eigenvalues into groups according to their asymptotics and sum all residues corresponding to a group.

\begin{lem} \label{lem:asymptalpha}
The following relation holds
\begin{equation*}
\sum_{s\in J_q}\frac{\al_{n s}}{m_{n s}}=\frac{1}{\pi n}(I-h_1)^{-1}W^\dagger I_q W(I+h_1)^{-1}+O\left(n^{-2}\right),\quad |n|\rightarrow\infty,\quad q=\overline{1,m},
\end{equation*}
where $W$ is the unitary matrix, defined in \eqref{initAWM}.
\end{lem}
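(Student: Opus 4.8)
The plan is to recognize the grouped sum $\sum_{s\in J_q}\frac{\al_{ns}}{m_{ns}}$ as the total residue of $M(\rho)$ over the cluster of eigenvalues accumulating at $\rho_{nq}^0=n+\om_q$, and hence as a contour integral I can evaluate asymptotically. Concretely, fix a circle $\ga_{nq}$ of radius $\de$ about $n+\om_q$ as in Lemma~\ref{lem:asymptrho}; for large $|n|$ it encloses exactly the eigenvalues $\rho_{ns}$ with $s\in J_q$, all of which are simple poles of $M$. Summing residues, and noting that dividing by $m_{ns}$ precisely compensates for eigenvalues that are listed several times, gives the identity $\sum_{s\in J_q}\frac{\al_{ns}}{m_{ns}}=\frac{1}{2\pi i}\oint_{\ga_{nq}}M(\rho)\,d\rho$, which is the object I will estimate.

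Next I would replace $M(\rho)$ on $\ga_{nq}$ by its leading asymptotics. Starting from \eqref{Mexp}, $M(\rho)=-(V(\vv))^{-1}V(S)$, I use \eqref{asymptVphi} to write $V(\vv)=i\rho\,B(\rho)+O(\exp(|\tau|\pi))$ and factor $B(\rho)=\tfrac12(I+H_1)P_-(\pi)\bigl(\exp(i\rho\pi)I-\exp(-i\rho\pi)A\bigr)(I-h_1)$, with $A$ from \eqref{defA}; a parallel computation from \eqref{asymptS} yields $V(S)=C(\rho)+O(\rho^{-1}\exp(|\tau|\pi))$, where $C(\rho)=\tfrac12\bigl(\exp(i\rho\pi)(I+H_1)P_-(\pi)+\exp(-i\rho\pi)(I-H_1)P_+(\pi)\bigr)$. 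Diagonalizing $A=W^\dagger\mathcal{M}W$ as in \eqref{initAWM}, the inner factor becomes $W^\dagger\Lambda(\rho)W$ with $\Lambda(\rho)=\exp(i\rho\pi)I-\exp(-i\rho\pi)\mathcal{M}$ diagonal, and a short manipulation collapses the product to
\begin{equation*}
B(\rho)^{-1}C(\rho)=(I-h_1)^{-1}W^\dagger\Lambda(\rho)^{-1}\bigl(\exp(i\rho\pi)W+\exp(-i\rho\pi)\mathcal{M}W(I-h_1)(I+h_1)^{-1}\bigr).
\end{equation*}
On $\ga_{nq}$ the matrix $\Lambda(\rho)^{-1}$ is bounded away from its one interior pole, so $B(\rho)^{-1}=O(1)$ there, and feeding the remainders through gives $M(\rho)=-\tfrac{1}{i\rho}B(\rho)^{-1}C(\rho)+O(n^{-2})$ uniformly on $\ga_{nq}$.

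It then remains to evaluate the residue of $-\tfrac{1}{i\rho}B(\rho)^{-1}C(\rho)$ at $\rho=n+\om_q$. Since $\Lambda(\rho)^{-1}$ is diagonal with $q$-block entries $\bigl(\exp(i\rho\pi)-\exp(-i\rho\pi)\mu_q\bigr)^{-1}$ and $\mu_q=\exp(2\pi i\om_q)$, its residue there is $\bigl(2\pi i(-1)^n e^{i\pi\om_q}\bigr)^{-1}I_q$. Substituting the pole values $\exp(\pm i\rho\pi)=(-1)^n e^{\pm i\pi\om_q}$ and using $I_q\mathcal{M}=\mu_q I_q$, the two exponential contributions combine through the identity $I+(I-h_1)(I+h_1)^{-1}=2(I+h_1)^{-1}$, yielding $\Res_{\rho=n+\om_q}\bigl[\Lambda(\rho)^{-1}(\cdots)\bigr]=\tfrac{1}{\pi i}I_q W(I+h_1)^{-1}$. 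Premultiplying by $(I-h_1)^{-1}W^\dagger$ and by the scalar $-\tfrac{1}{i(n+\om_q)}$, then expanding $\tfrac{1}{n+\om_q}=\tfrac1n+O(n^{-2})$, produces exactly $\tfrac{1}{\pi n}(I-h_1)^{-1}W^\dagger I_q W(I+h_1)^{-1}+O(n^{-2})$, which is the assertion.

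The main obstacle is the uniform error control on $\ga_{nq}$, not the residue algebra. The remainders in \eqref{asymptS}--\eqref{asymptphi} are amplified by $(V(\vv))^{-1}$, which is large near the eigenvalues; the key point is that on the fixed-radius circle $\ga_{nq}$ one stays a distance $\ge\de$ from every eigenvalue locus, so $\|\Lambda(\rho)^{-1}\|=O(1)$ and hence $\|B(\rho)^{-1}\|=O(1)$ there. A Neumann-series perturbation of $V(\vv)=i\rho\bigl(B(\rho)+O(n^{-1})\bigr)$ then gives $(V(\vv))^{-1}=\tfrac{1}{i\rho}\bigl(B(\rho)^{-1}+O(n^{-1})\bigr)$ uniformly on $\ga_{nq}$, and combined with the $O(n^{-1})$ error in $V(S)$ this bounds $M(\rho)-\bigl(-\tfrac{1}{i\rho}B^{-1}C\bigr)$ by $O(n^{-2})$ uniformly; integrating over a contour of length $O(1)$ keeps the discrepancy $O(n^{-2})$. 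Verifying that these perturbative bounds are genuinely uniform in $n$, and that $\de$ can be chosen independently of $n$ as secured by Lemma~\ref{lem:asymptrho}, is the delicate part that makes the argument rigorous.
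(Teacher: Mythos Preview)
Your proposal is correct and follows essentially the same route as the paper: you identify the grouped residue sum as $\tfrac{1}{2\pi i}\oint_{\ga_{nq}}M(\rho)\,d\rho$, replace $M(\rho)$ on $\ga_{nq}$ by its leading asymptotic $-\tfrac{1}{i\rho}B(\rho)^{-1}C(\rho)$ with an $O(n^{-2})$ uniform error, and compute the residue of the latter explicitly via the diagonalization $A=W^\dagger\mathcal{M}W$. The paper does exactly this, packaging your $-\tfrac{1}{i\rho}B(\rho)^{-1}C(\rho)$ as the model function $M_0(\rho)=-(i\rho)^{-1}(I-h_1)^{-1}\bigl(\exp(2i\rho\pi)I-A\bigr)^{-1}\bigl(\exp(2i\rho\pi)I+B\bigr)$ (your form after factoring out $\exp(-i\rho\pi)$ and the common left factor $\tfrac12(I+H_1)P_-(\pi)$), and carries out the same residue algebra and the same Neumann-type perturbation bound on $\ga_{nq}$.
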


\begin{proof}
Using \eqref{asymptS}, we obtain
\begin{equation*}
V(S)=\frac{1}{2}\exp(i\rho\pi)(I+H_1)P_-(\pi)+\frac{1}{2}\exp(-i\rho\pi)(I-H_1)P_+(\pi)+O\Bigl(|\rho|^{-1}\exp\bigl(|\tau|\pi\bigr)\Bigr),
\end{equation*}
Substitute this and \eqref{asymptVphi} into \eqref{Mexp}:
\begin{multline*}
M(\rho)=-(V(\vv))^{-1} V(S) =
-(i\rho)^{-1}\biggl\{\frac{1}{2}\exp(i\rho\pi)(I+H_1)P_-(\pi)(I-h_1)\\
-\frac{1}{2}\exp(-i\rho\pi)(I-H_1)P_+(\pi)(I+h_1)+O\Bigl(|\rho|^{-1}\exp\bigl(|\tau|\pi\bigr)\Bigr)\biggr\}^{-1}\\
\cdot\biggl\{\frac{1}{2}\exp(i\rho\pi)(I+H_1)P_-(\pi)+\frac{1}{2}\exp(-i\rho\pi)(I-H_1)P_+(\pi)\\
+O\Bigl(|\rho|^{-1}\exp\bigl(|\tau|\pi\bigr)\Bigr)\biggr\}=-(i\rho)^{-1}(I-h_1)^{-1}\biggl\{\exp(2i\rho\pi)I \\
-P_-^{-1}(\pi)(I+H_1)^{-1}(I-H_1)P_+(\pi)(I+h_1)(I-h_1)^{-1}+O\Bigl(|\rho|^{-1}\exp\bigl(2|\tau|\pi\bigr)\Bigr)\biggr\}\\
\cdot\biggl\{\exp(2i\rho\pi)I +P_-^{-1}(\pi)(I+H_1)^{-1}(I-H_1)P_+(\pi)+ O\Bigl(|\rho|^{-1}\exp\bigl(2|\tau|\pi\bigr)\Bigr)\biggr\}.
\end{multline*}
Finally, we get
\begin{multline} \label{asymptM}
M(\rho)=-(i\rho)^{-1}(I-h_1)^{-1}\biggl\{\exp(2i\rho\pi)I-A +O\Bigl(|\rho|^{-1}\exp\bigl(2|\tau|\pi\bigr)\Bigr)\biggr\}^{-1}\\
\cdot\biggl\{\exp(2i\rho\pi)I+B+O\Bigl(|\rho|^{-1}\exp\bigl(2|\tau|\pi\bigr)\Bigr)\biggr\},
\end{multline}
where $A$ was defined by \eqref{defA} and
\begin{equation} \label{defB}
B=A(I-h_1)(I+h_1)^{-1}.
\end{equation}
Introduce the matrix function
\begin{equation*}
M_0(\rho)=-(i\rho)^{-1}(I-h_1)^{-1}\bigl(\exp(2i\rho\pi)I-A\bigr)^{-1}\cdot\bigl(\exp(2i\rho\pi)I+B\bigr).
\end{equation*}

Let $\ga_{nq}$ be the contours used in the proof of Lemma 1.
They are circles with centers in $\rho_{nq}^0 = n + \om_q$ and a sufficiently small fixed radius, such that
$\rho_{ms}^0 \in \mbox{int}\, \ga_{nq}$ if and only if $\rho_{ms}^0 = \rho_{nq}^0$.

According to \eqref{asymptM}, we have
\begin{equation*}
\bigl\|M(\rho)-M_0(\rho)\bigr\|\le\frac{C}{n^2},\quad\rho\in\ga_{nq}.
\end{equation*}
The residue theorem yields
\begin{equation} \label{intM}
\frac{1}{2\pi i}\int\limits_{\ga_{n q}}\bigl(M(\rho)-M_0(\rho)\bigr)\,d\rho=\sum_{s\in J_q}\frac{1}{m_{n s}}\Res_{\rho=\rho_{n q}}M(\rho)-\Res_{\rho=\rho_{n q}^0}M_0(\rho)=O(n^{-2}),\quad |n|\to\infty.
\end{equation}

Clearly, $M_0(\rho)$ has simple poles at $\rho_{n q}^0$, and using \eqref{initAWM}, we derive
\begin{multline*}
\Res_{\rho=\rho_{n q}^0}M_0(\rho)=-\lim_{\rho\to\rho_{n q}^0}(\rho-\rho_0)(i\rho)^{-1}(I-h_1)^{-1}W^\dagger\bigl(\exp(2i\rho\pi )I-\mathcal{M}\bigr)^{-1}W\bigl(\exp(2i\rho\pi )I+B\bigr) \\
=\frac{1}{2\pi \rho_{n q}^0}(I-h_1)^{-1}W^\dagger\mu_q^{-1}I_q W(\mu_q I+B)=\colon\al_{n q}^0.
\end{multline*}
Using the obvious relation $\mu_q^{-1}I_q\mathcal{M}=I_q$ and \eqref{defB}, we get
\begin{equation*}
W^\dagger\mu_q^{-1}I_q W B(I+h_1)=W^\dagger\mu_q^{-1}I_q WW^\dagger\mathcal{M}W(I-h_1)=W^\dagger I_q W(I-h_1).
\end{equation*}
Therefore
\begin{equation*}
(I-h_1)\al_{n q}^0(I+h_1)=\frac{1}{2\pi \rho_{n q}^0}\bigl(W^\dagger I_q W(I+h_1)+W^\dagger I_q W(I-h_1)\bigr),
\end{equation*}
\begin{equation*}
\al_{n q}^0=\frac{1}{\pi n}(I-h_1)^{-1}W^\dagger I_q W(I+h_1)^{-1}+O(n^{-2}), \quad |n| \to \infty.
\end{equation*}
Together with \eqref{intM} this yields the assertion of the Lemma.
\end{proof}

Summarizing the results of the Lemmas, we arrive at Theorem~\ref{thm:SD}.

Note that there is a sufficiently wide class of pencils, for which
assertions (ii) and (iii) of Theorem~\ref{thm:SD} are valid for all $n \in w$.
Namely, (ii) holds for all eigenvalues when
\begin{equation*}
  F[Y] := (Y', Y') - (Y, Q_0 Y) + Y^{\dagger} (\pi) H_0 Y(\pi) - Y^{\dagger}(0) h_0 Y(0) \ge 0
\end{equation*}
for all eigenvectors $Y = Y(x, \rho_0)$.
For example, this holds when $Q_0(x) \le 0$, $H_0 \ge 0$,
$h_0 \le 0$.
In this case, (iii) is valid for all eigenvalues except zero. To make it valid for zero,
we require additionally that $F[Y] > 0$ for eigenvectors corresponding to $\rho_0 = 0$.

\bigskip

{\large \bf 3. Contour integration}

\medskip

Assume that the spectral data $SD$ of an unknown pencil $L$ are given.

At first, choose a model pencil $\tilde L$ with the spectral data satisfying asymptotics
\eqref{asymptrho} and \eqref{asymptalpha} with the same constants as the data of $L$.
This can be done by the following algorithm.

\medskip

\textbf{Algorithm 1.} \textit{Choice of a model pencil}.

\begin{enumerate}

\item Construct $M(\rho)$ via \eqref{reprM}.
Using \eqref{asymptM}, one can easily derive
\begin{equation*}
M(\rho)=(i \rho)^{-1}(I+h_1)^{-1}+O(\rho^{-2}),\quad \rho=i \tau,\quad \tau \to+\infty.
\end{equation*}
Therefore, having $M(\rho)$, we can obtain $h_1$ and set $\tilde h_1\colon =h_1$.

\item Find $\om_q$ from the asymptotics \eqref{asymptrho}, $\mu_q\colon =e^{2\pi i \om_q},\ \mathcal{M}:=\mbox{diag} \, \{\mu_q\}_{q= \overline{1, m}}.$

\item Construct $J_q$ and $I_q$. From the asymptotics \eqref{asymptalpha}, using $h_1$, find $W^\dagger I_q W$.
Note that
$$
	A=W^\dagger\mathcal{M}W=\sum\limits_{q=1}^m|J_q|^{-1}\mu_q W^\dagger I_q W,
$$
so we can construct $\tilde A=A$.

\item Put $\tilde H_1=0$. According to \eqref{defA},
we need to find such a potential $\tilde Q_1(x)$, that
\begin{equation*}
\tilde P_-^\dagger(\pi)\tilde P_+(\pi)=\tilde A(I-\tilde h_1)(I+\tilde h_1)^{-1}=:\mathcal{O}.
\end{equation*}
Try a constant anti-Hermitian matrix:
\begin{equation*}
\tilde Q_1(x)\equiv T,\quad T^\dagger=-T.
\end{equation*}
Then $P_{\pm}(x) = \exp(\pm Tx)$, and we arrive at the matrix equation $\exp(2 T \pi) = \mathcal{O}$.
It can be easily solved, since the matrix $\mathcal{O}$ is unitary.

\item Put $\tilde Q_0(x) = \tilde h_0 = \tilde H_0 = 0$, $x \in [0, \pi]$.

\end{enumerate}

Denote
\begin{equation} \label{defD}
 	D(x, \rho, \theta) = \frac{\langle \vv^*(x, \theta), \varphi(x, \rho) \rangle}{\rho - \theta}.
\end{equation}
Similarly to the scalar case (see \cite[Lemma 3]{Yur00}), one can obtain the following estimate
\begin{equation} \label{estD}
 	\| D(x, \rho, \theta) \| \le C_x \frac{|\rho| + |\theta| + 1}{|\rho - \theta| + 1}
 	\exp(|\mbox{Im}\,\rho|x) \exp(|\mbox{Im}\,\theta|x),
\end{equation}
where $C_x$ is a constant depending on $x$.

Consider the block-matrix $\mathcal{P}(x,\rho)=[\mathcal{P}_{jk}(x,\rho)]_{j,k=1,2}$
defined by
\begin{equation} \label{defP}
\mathcal{P}(x,\rho) \left[ \begin{array}{ll} \tilde\vv(x,\rho) & \tilde\Phi(x,\rho)\\ \tilde\vv'(x,\rho) & \tilde\Phi'(x,\rho) \end{array}\right]
= \left[\begin{array}{ll} \vv(x,\rho) & \Phi(x,\rho)\\ \vv'(x,\rho) & \Phi'(x,\rho) \end{array}\right].
\end{equation}

Note that if $Y$ satisfies \eqref{eqv} and $Z$ satisfies \eqref{eqvZ} for the same $\rho$, then
\begin{equation} \label{wron}
  \frac{d}{dx} \langle Z, Y\rangle = 0.
\end{equation}
Hence the Wronskian $\langle Z, Y\rangle$ does not depend on $x$. Consequently, using
boundary conditions for $\vv$, $\vv^*$, $\Phi$ and $\Phi^*$, we easily derive
\begin{equation} \label{inverseform}
\left[\begin{array}{ll} \vv(x,\rho) & \Phi(x,\rho)\\ \vv'(x,\rho) & \Phi'(x,\rho) \end{array}\right]^{-1} =
\left[\begin{array}{ll} {\Phi^*}'(x, \rho) & -\Phi^*(x,\rho)\\ -{\vv^*}'(x,\rho) & \vv^*(x,\rho) \end{array}\right],
\end{equation}
\begin{equation} \label{Pj12}
\begin{array}{l}
\mathcal{P}_{j1}(x,\rho)=\vv^{(j-1)}(x,\rho){\tilde\Phi^{*'}}(x,\rho)-
\Phi^{(j-1)}(x,\rho){\tilde\vv^{*'}}(x,\rho), \\
\mathcal{P}_{j2}(x,\rho)=\Phi^{(j-1)}(x,\rho){\tilde\vv}^{*}(x,\rho)-
\vv^{(j-1)}(x,\rho){\tilde\Phi}^{*}(x,\rho).
\end{array}
\end{equation}
We mention two other facts that follows from \eqref{wron} and will be used further:
\begin{equation} \label{wronphi}
  \langle \vv^*(x, \rho), \vv(x, \rho) \rangle = \langle \vv^*(x, \rho), \vv(x, \rho) \rangle_{x = 0} = 0,
\end{equation}
\begin{equation} \label{eqMM*}
	\langle \Phi^*(x, \rho), \Phi(x, \rho) \rangle = M(\rho) - M^*(\rho) = 0,
	\quad \Rightarrow \quad M(\rho) \equiv M^*(\rho).
\end{equation}

Denote
$$
	G_{\de} := \{ \rho \in \mathbb{C} \colon |\rho - \rho_{nq} | > \de, \, |\rho - \tilde \rho_{nq}| > \de, \, n \in w, \, q = \overline{1, m} \}, \quad \de > 0.
$$

In a standard way (see \cite{BF13}), one can prove the following Lemma.

\begin{lem} \label{lem:asymptP}
Assume $h_1 = \tilde h_1$.
Then for each fixed $x \in [0, \pi]$,
the entries of the block-matrix $\mathcal{P}$ have the following asymptotics as $|\rho| \to \infty$,
$\rho \in G_{\de}$:
$$
  \begin{array}{ll}
   \mathcal{P}_{11}(x, \rho) = \Omega(x) + O(\rho^{-1}), & \mathcal{P}_{12}(x, \rho) = \rho^{-1} \Lambda(x) + O(\rho^{-2}), \\
   \mathcal{P}_{21}(x, \rho) = - \rho \Lambda(x) + O(1), & \mathcal{P}_{22}(x, \rho) = \Omega(x) + O(\rho^{-1}),
  \end{array}
$$
where
\begin{equation} \label{defOmega}
    \Omega(x) := \frac{1}{2} \left( P_{-}(x) \tilde P^{\dagger}_{-}(x) + P_{+}(x) \tilde P^{\dagger}_{+}(x) \right),
\end{equation}
\begin{equation} \label{defLambda}
    \Lambda(x) := \frac{1}{2 i} \left( P_{-}(x) \tilde P^{\dagger}_{-}(x) - P_{+}(x) \tilde P^{\dagger}_{+}(x) \right).
\end{equation}
\end{lem}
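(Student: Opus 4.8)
The plan is to start from the explicit formulas \eqref{Pj12}, which express every entry $\mathcal{P}_{jk}$ as a Wronskian-type product of the solutions $\vv^{(j-1)},\Phi^{(j-1)}$ of $L$ and the solutions $\tilde\vv^{*},\tilde\Phi^{*}$ of $\tilde L^*$ (together with their $x$-derivatives), and to substitute into them the leading asymptotics of all four constituents, collecting terms of like order in $\rho$. The inputs I need are: the asymptotics \eqref{asymptphi} for $\vv^{(\nu)}$; the analogous asymptotics for $\tilde\vv^{*(\nu)}$; and Birkhoff-type asymptotics in $G_{\de}$ for the Weyl solutions $\Phi^{(\nu)}$ and $\tilde\Phi^{*(\nu)}$.

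First I would obtain the asymptotics of $\tilde\vv^{*(\nu)}$. Since $\tilde\vv^{*}(x,\rho)=\tilde\vv^{\dagger}(x,\bar\rho)$, I take the conjugate transpose of \eqref{asymptphi} written for $\tilde L$ at the point $\bar\rho$; using $\overline{i\bar\rho}=-i\rho$ and $\overline{\exp(i\bar\rho x)}=\exp(-i\rho x)$, together with condition (II) (so that $(I\pm\tilde h_1)^{\dagger}=I\mp\tilde h_1$) and the hypothesis $\tilde h_1=h_1$, the two exponential channels of $\tilde\vv^{*}$ acquire coefficients built from $\tilde P_{\pm}^{\dagger}(x)$ and $I\mp h_1$. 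Next I would obtain the asymptotics of $\Phi^{(\nu)}$ from $\Phi=S+\vv M$ (see \eqref{Phiexp}) by combining \eqref{asymptS}, \eqref{asymptphi} and the asymptotics \eqref{asymptM} of the Weyl matrix; the point of restricting to $\rho\in G_{\de}$ is that there the factor $(\exp(2i\rho\pi)I-A+O(\rho^{-1}))^{-1}$ in \eqref{asymptM} stays uniformly bounded (being away from the zeros of the characteristic function), so that $\Phi^{(\nu)}$ inherits a two-exponential form of order $\rho^{\nu-1}$; the same computation for $\tilde L^{*}$ yields $\tilde\Phi^{*(\nu)}$.

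With all four families of asymptotics in hand, I would multiply out the products in \eqref{Pj12}. The leading coefficients are simplified using the unitarity relations \eqref{propP} for $L$ and their analogues for $\tilde L$ (so that products such as $P_{-}^{\dagger}P_{-}$ collapse to $I$), and the definitions \eqref{defA}, \eqref{defB}. The resonant exponentials $\exp(\pm 2i\rho x)$ and all factors depending on $A$, $B$ and $H_1$ are expected to cancel between the two terms of each $\mathcal{P}_{jk}$, leaving only the combinations $\tfrac12(P_{-}\tilde P_{-}^{\dagger}\pm P_{+}\tilde P_{+}^{\dagger})$, i.e. $\Omega(x)$ and $\Lambda(x)$ from \eqref{defOmega}, \eqref{defLambda}, with the orders claimed (the extra factor $i\rho$ produced by differentiating accounts for $\mathcal{P}_{12}=O(\rho^{-1})$ against $\mathcal{P}_{21}=O(\rho)$).

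I expect the main obstacle to be the second step, namely establishing the asymptotics of the Weyl solutions $\Phi,\tilde\Phi^{*}$ uniformly on $G_{\de}$: since $M(\rho)$ has genuinely different leading behaviour in the upper and lower half-planes (because $\exp(2i\rho\pi)$ is small for $\tau>0$ and large for $\tau<0$), one must verify that the non-recessive exponential enters only through the controlled factor $(\exp(2i\rho\pi)I-A+O(\rho^{-1}))^{-1}$ and that the estimate holds uniformly for $\rho$ bounded away from the poles. The second delicate point is bookkeeping the cancellation in the third step: the individual coefficients of $\Phi$ are rational expressions in $A$, $B$, $h_1$ and $H_1$, yet the final answer must depend only on $P_{\pm}$ and $\tilde P_{\pm}$, so checking that the $A$-, $B$- and $H_1$-dependent parts cancel (forced by \eqref{propP} and condition (II)) is the computational heart of the argument, entirely parallel to the half-line case treated in \cite{BF13}.
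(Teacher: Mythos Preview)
Your proposal is correct and is precisely the ``standard'' computation the paper has in mind: the paper gives no proof of this lemma at all, merely citing \cite{BF13}, and the method there is exactly the one you describe --- substitute the two-exponential asymptotics of $\vv^{(\nu)}$, $\tilde\vv^{*(\nu)}$, $\Phi^{(\nu)}$, $\tilde\Phi^{*(\nu)}$ into \eqref{Pj12} and collect terms. Your identification of the two delicate points (uniform control of $\Phi$ on $G_{\de}$ via the boundedness of $(\exp(2i\rho\pi)I-A+O(\rho^{-1}))^{-1}$ away from the spectrum, and the cancellation of the $A$-, $B$-, $H_1$-dependent pieces so that only $P_{\pm}\tilde P_{\pm}^{\dagger}$ survive) is accurate.
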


For simplicity, we assume that $N = 0$ and $\tilde N = 0$, i.e. the assertion of Theorem~\ref{thm:SD} holds 
for all the eigenvalues of both pencils $L$ and $\tilde L$. We return to the general case in Section 6, since it requires
some modifications.

\begin{lem} \label{lem:contour}
The following relations hold:
\begin{equation} \label{cont1}
 	\Omega(x) \tilde \vv(x, \rho) =  \vv(x, \rho) +
 	\sum_{n \in w} \sum_{q = 1}^m \Bigl\{ \vv(x, \rho_{nq}) m_{nq}^{-1} \al_{nq} \tilde D(x, \rho, \rho_{nq}) -
 	\vv(x, \tilde \rho_{nq}) \tilde m_{nq}^{-1} \tilde \al_{nq} \tilde D(x, \rho, \tilde \rho_{nq}) \Bigr\},
\end{equation}
\begin{multline} \label{cont2}
 	D(x, \rho, \theta) - \tilde D(x, \rho, \theta) - \vv^*(x, \theta) \Lambda(x) \tilde \vv(x, \rho)
 	+ \sum_{n \in w} \sum_{q = 1}^m \Bigl\{
 	 D(x, \rho_{nq}, \theta) m_{nq}^{-1} \al_{nq} \tilde D(x, \rho, \rho_{nq}) \\ -
 	 D(x, \tilde \rho_{nq}, \theta) \tilde m_{nq}^{-1} \tilde \al_{nq} \tilde D(x, \rho, \tilde \rho_{nq})
 	\Bigr\} = 0.
\end{multline}
\end{lem}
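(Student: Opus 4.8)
The plan is to obtain both relations as residue-theorem (Mittag--Leffler type) expansions, in an auxiliary spectral variable $\xi$, of meromorphic matrix functions assembled from the transition matrix $\mathcal{P}$. Fix $x\in[0,\pi]$ and $\rho,\theta$ with $\rho$ not an eigenvalue of $L$ or $\tilde L$, and set
\[
\Psi(\xi):=\mathcal{P}_{11}(x,\xi)\tilde\vv(x,\rho)+\mathcal{P}_{12}(x,\xi)\tilde\vv'(x,\rho),\qquad \Psi'(\xi):=\mathcal{P}_{21}(x,\xi)\tilde\vv(x,\rho)+\mathcal{P}_{22}(x,\xi)\tilde\vv'(x,\rho),
\]
so that by \eqref{defP} one has $(\Psi(\rho),\Psi'(\rho))=(\vv(x,\rho),\vv'(x,\rho))$. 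Substituting $\Phi=S+\vv M$ (from \eqref{Phiexp}) and $\tilde\Phi^*=\tilde S^*+\tilde M^*\tilde\vv^*$ with $\tilde M^*=\tilde M$ (the tilded analogue of \eqref{eqMM*}) into \eqref{Pj12}, I see that $\Psi$ and $\Psi'$ are, as functions of $\xi$, meromorphic with only simple poles (Lemma~\ref{lem:simplepoles}), located at the points $\rho_{nq}$ and $\tilde\rho_{nq}$. Collecting the contributions of the two entries of $\mathcal{P}$ in each row, the residues collapse to a single Wronskian; using the residue data $m_{nq}^{-1}\al_{nq}$ and $\tilde m_{nq}^{-1}\tilde\al_{nq}$ of $M$ and $\tilde M$ (cf. \eqref{reprM}) one obtains, for instance,
\[
\Res_{\xi=\rho_{nq}}\Psi=-\vv(x,\rho_{nq})\,m_{nq}^{-1}\al_{nq}\,\langle\tilde\vv^*(x,\rho_{nq}),\tilde\vv(x,\rho)\rangle,
\]
and the analogous expression, with opposite sign, at $\tilde\rho_{nq}$; by \eqref{defD} the factor $\langle\tilde\vv^*(x,\rho_{nq}),\tilde\vv(x,\rho)\rangle$ equals $(\rho-\rho_{nq})\tilde D(x,\rho,\rho_{nq})$.

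To derive \eqref{cont1} I would integrate $\Psi(\xi)(\xi-\rho)^{-1}$ over the circles $\Gamma_R=\{|\xi|=R\}$, with $R=R_k\to\infty$ chosen between the spectral clusters so that $\Gamma_{R_k}\subset G_{\de}$. By Lemma~\ref{lem:asymptP} we have $\mathcal{P}_{11}\to\Omega(x)$ and $\mathcal{P}_{12}=O(\xi^{-1})$, hence $\Psi(\xi)=\Omega(x)\tilde\vv(x,\rho)+O(\xi^{-1})$ uniformly on $\Gamma_{R_k}$, so the contour integral tends to $\Omega(x)\tilde\vv(x,\rho)$. On the other hand, the residue theorem equates it with $\Psi(\rho)=\vv(x,\rho)$ plus the sum of $\Res_{\xi_0}\Psi/(\xi_0-\rho)$ over the poles $\xi_0\in\{\rho_{nq},\tilde\rho_{nq}\}$. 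Inserting the residues above and using $(\rho-\rho_{nq})/(\rho_{nq}-\rho)=-1$ converts the right-hand side into the series in \eqref{cont1}, which proves that relation.

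For \eqref{cont2} I would run the same argument for
\[
H(\xi):=\frac{{\vv^*}'(x,\theta)\,\Psi(\xi)-\vv^*(x,\theta)\,\Psi'(\xi)}{\xi-\theta}.
\]
Beside the poles at $\rho_{nq},\tilde\rho_{nq}$, the function $H$ has a new simple pole at $\xi=\theta$. At $\xi=\rho$ the numerator becomes the genuine Wronskian $\langle\vv^*(x,\theta),\vv(x,\rho)\rangle$, so $H(\rho)=D(x,\rho,\theta)$ by \eqref{defD}. The residue at $\xi=\theta$ is ${\vv^*}'(x,\theta)\Psi(\theta)-\vv^*(x,\theta)\Psi'(\theta)$; applying the identity $[{\vv^*}',-\vv^*]\mathcal{P}(x,\theta)=[{\tilde\vv^*}',-\tilde\vv^*]$ (evaluated at $\theta$), which follows from \eqref{defP} and \eqref{inverseform}, this equals $\langle\tilde\vv^*(x,\theta),\tilde\vv(x,\rho)\rangle$ and contributes $-\tilde D(x,\rho,\theta)$. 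The residues at $\rho_{nq}$ and $\tilde\rho_{nq}$ yield the kernel products $D(x,\rho_{nq},\theta)m_{nq}^{-1}\al_{nq}\tilde D(x,\rho,\rho_{nq})$ and its tilded counterpart. Finally, since $\mathcal{P}_{21}=-\xi\Lambda(x)+O(1)$ and $\mathcal{P}_{22}\to\Omega(x)$ (Lemma~\ref{lem:asymptP}), one checks that $H(\xi)=\vv^*(x,\theta)\Lambda(x)\tilde\vv(x,\rho)+O(\xi^{-1})$, so the contour integral of $H(\xi)(\xi-\rho)^{-1}$ tends to $\vv^*(x,\theta)\Lambda(x)\tilde\vv(x,\rho)$. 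Equating the two evaluations and rearranging gives exactly \eqref{cont2}.

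The residue bookkeeping is routine; the real difficulty is the analytic justification -- that the integrals over $\Gamma_{R_k}$ converge to the asserted limits and that the resulting series converge and may be summed term by term. This rests on the uniform asymptotics of $\mathcal{P}$ on $\Gamma_{R_k}\subset G_{\de}$ (Lemma~\ref{lem:asymptP}), the bound \eqref{estD} for $D$ and $\tilde D$, and, decisively, on the choice of $\tilde L$ in Algorithm~1 so that its spectral data share the leading asymptotics \eqref{asymptrho} and \eqref{asymptalpha} of $L$. Indeed, a single term behaves like $n^{-1}$ times a bounded $D$-kernel and is not summable; only the \emph{difference} of the paired $L$- and $\tilde L$-terms is $O(n^{-2})$, because $\rho_{nq}-\tilde\rho_{nq}=O(n^{-1})$ and the grouped weights differ by $O(n^{-2})$. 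Since \eqref{asymptalpha} controls only the cluster sums $\sum_{s\in J_q}m_{ns}^{-1}\al_{ns}$, I would organize the summation over the circles $\ga_{nq}$ of Lemma~\ref{lem:asymptrho}, grouping indices by $J_q$ as in \eqref{intM} and integrating each whole cluster together; this both produces the required cancellation and makes the passage $R_k\to\infty$ legitimate.
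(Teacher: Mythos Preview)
Your approach is essentially the same as the paper's: both arguments obtain \eqref{cont1} and \eqref{cont2} by writing Cauchy/residue representations for the entries of $\mathcal{P}(x,\xi)$ (the paper integrates $\mathcal{P}_{1k}(x,\xi)/(\rho-\xi)$ and $\mathcal{P}_{jk}(x,\xi)/[(\rho-\xi)(\xi-\theta)]$ separately and then assembles them via \eqref{defP}, while you first form the combinations $\Psi$ and $H$ and then integrate---a purely cosmetic difference), invoke Lemma~\ref{lem:asymptP} for the contribution at infinity, and pick up the eigenvalue residues through \eqref{Phiexp} and \eqref{reprM}. The only technical variation is that the paper places $\rho,\theta$ in the region $|\mbox{Im}\,\rho|\ge b$ outside the horizontal strip containing all eigenvalues and integrates over the strip boundary $\gamma_R$ rather than the full circle $\Gamma_R$; this cleanly separates the Cauchy kernel pole from the spectral poles and avoids having to assume $\rho$ is not an eigenvalue, but it does not change the substance of the argument.
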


\begin{proof}
1. Choose a number $b > \sup\limits_{n, q} \max \{ |\mbox{Im}\, \rho_{nq}|, |\mbox{Im}\, \tilde \rho_{nq}| \}$. Denote
$$
 	\Sigma_R := \{ \rho \colon |\mbox{Im}\, \rho| < b, \, |\rho| < R \}, \quad \ga_R := \partial \Sigma_R, \quad
 	\Gamma_R := \{ \rho \colon |\rho| = R \}, \quad \gamma_R^0 = \Gamma_R - \ga_R
$$
(the defined contours have counterclockwise circuit).
By Cauchy's integral formula,
$$
  \mathcal{P}_{1k}(x, \rho) - \Omega(x) \delta_{1k} = \frac{1}{2 \pi i}
  \int_{\ga_R^0} \frac{\mathcal{P}_{1k}(x, \theta) - \Omega(x) \delta_{1k}}{\rho - \theta} \, d \theta, \quad
  k = 1, 2, \quad x \in [0, \pi], \quad \rho \in \mbox{int}\, \ga_R^0,
$$
where $\de_{jk}$ is the Kronecker delta. Put
$$
\eps_R(x, \rho) := \int_{\Gamma_R} \frac{\mathcal{P}_{1k}(x, \theta) - \Omega(x) \delta_{1k}}{\rho - \theta} \, d \theta,
$$
Since we have chosen $\tilde h_1 = h_1$, we can apply Lemma~\ref{lem:asymptP} and obtain
$\lim\limits_{\substack{R \to \infty \\ \Gamma_R \subset G_{\de} }} \eps_R(x, \rho) = 0$.
Consequently,
\begin{equation} \label{smeq3}
 	\mathcal{P}_{1k}(x, \rho) = \Omega(x) \delta_{1k} + \lim_{\substack{R \to \infty \\ \Gamma_R \subset G_{\de}}}
 	\frac{1}{2 \pi i} \int_{\ga_R} \frac{\mathcal{P}_{1k}(x, \theta)}
 	{\rho - \theta} \, d \theta, \quad k = 1, 2, \quad x \in [0, \pi], \quad |\mbox{Im}\, \rho| \ge b.
\end{equation}

Substituting \eqref{smeq3} into \eqref{defP} and using \eqref{Pj12}, \eqref{Phiexp} and \eqref{defD}, we obtain
$$
	\vv(x, \rho) = \Omega(x) \tilde \vv(x, \rho) - \lim_{\substack{R \to \infty \\ \Gamma_R \subset G_{\de}}}
	\frac{1}{2 \pi i} \int_{\ga_R} \vv(x, \theta) (M(\theta) - \tilde M(\theta)) \tilde D(x, \rho, \theta) \, d \theta,
$$
since the terms with $S(x, \theta)$ and $\tilde S^*(x, \theta)$ vanish by Cauchy's theorem.
Calculating the integral over $\ga_R$ by the residue theorem, we arrive at \eqref{cont1}.

2. Analogously to \eqref{smeq3}, for $x \in [0, \pi]$, $|\mbox{Im}\,\rho|, |\mbox{Im} \, \theta| > b$
and $\ga_R \subset G_{\de}$, we derive
$$
 \frac{\mathcal{P}_{21}(x, \rho) - \mathcal{P}_{21}(x, \theta)}{\rho - \theta} = -\Lambda(x) + \frac{1}{2 \pi i} \int_{\ga_R}
 \frac{\mathcal{P}_{21}(x, \xi)}{(\rho - \xi)(\xi - \theta)} \, d \xi + \eps_{R, 21}(x, \rho, \theta),
$$
$$
  \frac{\mathcal{P}_{jk}(x, \rho) - \mathcal{P}_{jk}(x, \theta)}{\rho - \theta} = \frac{1}{2 \pi i} \int_{\ga_R}
  \frac{\mathcal{P}_{jk}(x, \xi)}{(\rho - \xi)(\xi - \theta)} \, d \xi + \eps_{R, jk}(x, \rho, \theta), \quad (j, k) \ne (2, 1),
$$
where $\lim\limits_{\substack{R \to \infty \\ \Gamma_R \subset G_{\de}}} \eps_{R, jk}(x, \rho, \theta) = 0$, $j, k = 1, 2$.

For an arbitrary vector function $Y = Y(x)$, in view of \eqref{inverseform} and \eqref{defP}, we have
$$
	\mathcal{P} \left[ \begin{array}{l} Y \\ Y' \end{array} \right] =
	\left[ \begin{array}{l} \vv \\ \vv' \end{array} \right] \langle \tilde \Phi^*, Y \rangle -
	\left[ \begin{array}{l} \Phi \\ \Phi' \end{array} \right] \langle \tilde \vv^*, Y \rangle.
$$
Therefore
\begin{multline} \label{sum1}
    [{\vv^*}'(x, \theta), -\vv^*(x, \theta)]\frac{\mathcal{P}(x, \rho) - \mathcal{P}(x, \theta)}{\rho - \theta}
    \left[ \begin{array}{l} \tilde \vv(x, \rho) \\ \tilde \vv'(x, \rho) \end{array} \right] =
    \vv^*(x, \theta) \Lambda(x) \tilde \vv(x, \rho) \\ +
	\frac{1}{2 \pi i} \int_{\ga}
    \frac{(\langle \vv^*(x, \theta), \vv(x, \xi) \rangle \langle \tilde \Phi^*(x, \xi), \tilde \vv(x, \rho) \rangle -
    \langle \vv^*(x, \theta), \Phi(x, \xi) \rangle \langle \tilde \vv^*(x, \xi), \tilde \vv(x, \rho) \rangle ) d \xi}{(\rho - \xi)(\xi - \theta)} \\
    + \eps^1_R(x, \rho, \theta).
\end{multline}
Using \eqref{defP} and \eqref{inverseform}, we obtain
\begin{equation} \label{sum2}
  [{\vv^*}'(x, \theta), -\vv^*(x, \theta)] \mathcal{P}(x, \rho)
  \left[ \begin{array}{l} \tilde \vv(x, \rho) \\ \tilde \vv'(x, \rho) \end{array} \right] =
  \langle \vv^*(x, \theta), \vv(x, \rho) \rangle,
\end{equation}
\begin{equation} \label{sum3}
  [{\vv^*}'(x, \theta), -\vv^*(x, \theta)] \mathcal{P}(x, \theta)
  \left[ \begin{array}{l} \tilde \vv(x, \rho) \\ \tilde \vv'(x, \rho) \end{array} \right] =
  \langle \tilde \vv^*(x, \theta), \tilde \vv(x, \rho) \rangle.
\end{equation}

Combining \eqref{sum1}, \eqref{sum2} and \eqref{sum3} and using \eqref{defD}, we get
\begin{multline*}
 	D(x, \rho, \theta) - \tilde D(x, \rho, \theta)
 	- \vv^*(x, \theta) \Lambda(x) \tilde \vv(x, \rho) - \frac{1}{2 \pi i} \int_{\ga_R} \Bigl( D(x, \xi, \theta) \tilde M^*(\xi)
 	\tilde D(x, \rho, \xi) \\ - D(x, \xi, \theta) M(\xi) \tilde D(x, \rho, \xi) \Bigr) \, d \xi = \eps^1_R(x, \rho, \theta).
\end{multline*}
Note that $M(\rho) \equiv M^*(\rho)$ by \eqref{eqMM*} and $\lim_{R \to 0} \eps_R^1(x, \rho, \theta) = 0$.
Therefore, calculating the
integral over $\ga_R$ by the residue theorem, we obtain \eqref{cont2}.	
\end{proof}

Denote
$$
    \tilde \Omega(x) := \frac{1}{2} \left( \tilde P_{-}(x) P^{\dagger}_{-}(x) + \tilde P_{+}(x) P^{\dagger}_{+}(x) \right),
$$
$$
    \tilde \Lambda(x) := \frac{1}{2 i} \left( \tilde P_{-}(x) P^{\dagger}_{-}(x) - \tilde P_{+}(x) P^{\dagger}_{+}(x) \right),
$$
(compare with \eqref{defOmega}, \eqref{defLambda}).
Symmetrically with the relations \eqref{cont1} and \eqref{cont2}, we have
\begin{equation} \label{cont3}
 	\tilde \Omega(x) \vv(x, \rho) =  \tilde \vv(x, \rho)  -
 	\sum_{n \in w} \sum_{q = 1}^m \Bigl\{ \tilde \vv(x, \rho_{nq}) m_{nq}^{-1} \al_{nq} D(x, \rho, \rho_{nq}) -
 	\tilde \vv(x, \tilde \rho_{nq}) \tilde m_{nq}^{-1} \tilde \al_{nq} D(x, \rho, \tilde \rho_{nq}) \Bigr\},
\end{equation}
\begin{multline} \label{cont4}
 	D(x, \rho, \theta) - \tilde D(x, \rho, \theta)
 	+ \tilde \vv^*(x, \theta) \tilde \Lambda(x) \vv(x, \rho) +
 	\sum_{n \in w} \sum_{q = 1}^m \Bigl\{
 	 \tilde D(x, \rho_{nq}, \theta) m_{nq}^{-1} \al_{nq} D(x, \rho, \rho_{nq}) \\ -
 	 \tilde D(x, \tilde \rho_{nq}, \theta) \tilde m_{nq}^{-1} \tilde \al_{nq} D(x, \rho, \tilde \rho_{nq})
 	\Bigr\} = 0.
\end{multline}

\bigskip

{\large \bf 4. Construction of the main equation in a Banach space}

\bigskip

In the previous section, Inverse Problem 1 was reduced to the linear equation \eqref{cont1}.
However, the series in this equation does not converge absolutely, so it is inconvenient to use \eqref{cont1}
as a main equation of the inverse problem. 
Now we plan to transform \eqref{cont1} into an equation in a Banach space
of infinite sequences.

In contrast to the scalar case, the spectrum of the matrix pencil $L$ can contain subsequences of eigenvalues 
with the similar asymptotics (some of $\om_q$ in \eqref{asymptrho} can be equal).
In order to overcome this difficulty, we divide the eigenvalues into groups $G_k$ by asymptotics
and then operate not with single eigenvalues but with the whole groups.

In this section, we assume that $x \in [0, \pi]$ is fixed.

Introduce sets $G_{n q}= \{ \rho_{n s},\tilde\rho_{n s},\ s\in J_q\}, \ n \in w$.
If some $\om_q$ are equal to each other, the corresponding sets coincide.
Therefore we assume (without loss of generality),
that $\{ \om_q\}_{q=1}^p$ are all the different values among $\{ \om_q\}_{q=1}^m$.
Consider $G_{n q}, \ q= \overline{1, p}$ and renumerate them as  $G_k, \ k\in\mathbb{Z} \backslash \{0\}$,
in the natural order.

Let $G=\{g_i\}_{i=1}^r$ be a non-empty set of distinct numbers. Define the diameter of $G$:
\begin{equation*}
\mbox{diam}(G) :=\underset{i=\overline{2, r}}{\max}\bigl\{|g_i-g_1|\bigr\}
\end{equation*}
and a finite-dimensional space $\mathcal{B}(G) :=\bigl\{f\colon G\to\mathbb{C}^{m\times m}\bigr\}$ with the norm
\begin{equation*}
\|f\|_{\mathcal{B}(G)}=\max\biggl\{\bigl\|f(g_1)\bigr\|,\ \underset{i=\overline{2, r}}{\max}\Bigl\{\bigl\|f(g_1)-f(g_i)\bigr\|\Bigr\}\bigl(\mbox{diam}(G)\bigr)^{-1}\biggr\}
\end{equation*}
Note that $\mbox{diam}(G)$ can be zero only in the case $r=1$, but then $\|f\|_{\mathcal{B}(G)}=\|f(g_1)\|$.

Consider a Banach space
\begin{equation} \label{defB1}
\mathcal{B}=\bigl\{f=\{f_k\}_{k\in\mathbb{Z}\backslash\{0\}}\colon f_k\in\mathcal{B}(G_k),\ \|f\|_{\mathcal{B}} := \underset{k\in\mathbb{Z}\backslash\{0\}}{\sup}\|f_k\|_{\mathcal{B}(G_k)}<\infty\bigr\},
\end{equation}

By \eqref{asymptrho}, $\mbox{diam}(G_{n q})\le\dfrac{C}{|n|}$,
and consequently, $\mbox{diam}(G_k)\le\dfrac{C}{|k|}$.
Using \eqref{asymptphi} and Schwarz's Lemma, it is easy to see that
$$
 	\| \vv(x, \rho_1) \| \le C, \quad \| \vv(x, \rho_1) - \vv(x, \rho_2) \| \le C |\rho_1 - \rho_2|,
$$
for $\rho_1, \rho_2 \in G_k$.
Therefore $\vv(x, \rho)$ forms an element of $\mathcal{B}$:
\begin{equation*}
\vv(x,\rho)_{|\mathcal{B}}= \bigl\{\vv(x,\rho)_{|G_{n q}}\bigr\}_{n \in w, q=\overline{1, p}} \in \mathcal{B}
\end{equation*}
\begin{equation*}
\vv(x,\rho)_{|G_{n q}}=\bigl\{\vv(x,\rho_{n s}), \ \vv(x,\tilde\rho_{n s}), \ s \in J_q\bigr\}.
\end{equation*}
Denote $\psi(x) := \vv(x,\rho)_{|\mathcal{B}}$, $\tilde \psi(x) := \vv(x,\rho)_{|\mathcal{B}}$.
Then the relations \eqref{cont1} and \eqref{cont3}
can be transformed into the following equations in the Banach space $\mathcal{B}$:
\begin{equation} \label{eqpsi}
 	\Omega(x) \tilde \psi(x) = \psi(x) (I + \tilde R(x)),
\end{equation}
$$
 	\tilde \Omega(x) \psi(x) = \tilde \psi(x) (I - R(x)),
$$
where $I$ is the identity operator in $\mathcal{B}$, and $R(x)$, $\tilde R(x)$ are linear operators, acting from
$\mathcal{B}$ to $\mathcal{B}$.
\footnote{These operators belong to the ring for which the elements of $\mathcal{B}$ form a right
module, so we write operators to the right of operands.}
The explicit form of $\tilde R(x)$, $R(x)$ can be derived from \eqref{cont1} and \eqref{cont3}.
Further we investigate the operator $R(x)$, the same properties for $\tilde R(x)$ can be obtained symmetrically.

According to \eqref{cont3}, the operator $R(x)$ acts to an arbitrary element $\psi \in \mathcal{B}$ in the following way:
\begin{equation} \label{defRsimp}
\bigl(\psi R(x)\bigr)_n = \sum\limits_{|k|>0}\psi_k R_{k,n}(x),  \quad R_{k,n}(x)\colon\mathcal{B}(G_k)\to\mathcal{B}(G_n), 
\quad k, n \in \mathbb{Z} \backslash \{ 0\}.
\end{equation}

\begin{lem} \label{lem:Rbound}
The series in \eqref{defRsimp} converge absolutely and the operator $R(x)$ is bounded on $\mathcal{B}$.
\end{lem}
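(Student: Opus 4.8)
The plan is to reduce the whole statement to a single uniform estimate on the block operators. Since the norm on $\mathcal{B}$ is the supremum over $k$ of the $\mathcal{B}(G_k)$-norms, \eqref{defRsimp} gives, for every $\psi\in\mathcal{B}$,
$$
\bigl\|(\psi R(x))_n\bigr\|_{\mathcal{B}(G_n)}\le\sum_{|k|>0}\|\psi_k\|_{\mathcal{B}(G_k)}\,\|R_{k,n}(x)\|_{op}\le\|\psi\|_{\mathcal{B}}\sum_{|k|>0}\|R_{k,n}(x)\|_{op},
$$
where $\|\cdot\|_{op}$ is the operator norm from $\mathcal{B}(G_k)$ to $\mathcal{B}(G_n)$. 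Hence both assertions — absolute convergence of the series in \eqref{defRsimp} and boundedness of $R(x)$ — follow at once from the single inequality
$$
\sup_{n}\ \sum_{|k|>0}\|R_{k,n}(x)\|_{op}\le C_x .
$$
So the entire proof amounts to establishing this bound.

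Next I would write $R_{k,n}(x)$ explicitly from \eqref{cont3}: each block is a finite sum over $s\in J_q$ of elementary operators sending $f\in\mathcal{B}(G_k)$ to the restriction to $G_n$ of $f(\rho_{ks})\,m_{ks}^{-1}\al_{ks}\,D(x,\cdot,\rho_{ks})$ and the matching $\tilde L$-term with $\tilde\rho_{ks}$. To bound their $\mathcal{B}(G_n)$-norms I would use three ingredients: (a) estimate \eqref{estD}, which for the (real, by Lemma~\ref{lem:real} under $N=0$) eigenvalues makes $\|D(x,\rho,\theta)\|$ of order $\frac{|n|+|k|+1}{|n-k|+1}$ for $\rho\in G_n$, $\theta\in G_k$, together with the analogous bounds for the $\rho$- and $\theta$-divided differences of $D$ needed for the second component of the $\mathcal{B}(G_n)$-norm; (b) the diameter bound $\mbox{diam}(G_k)\le C|k|^{-1}$ coming from \eqref{asymptrho}; and (c) the weight-matrix asymptotics of Lemma~\ref{lem:asymptalpha}, recalling that $\tilde L$ was chosen so that its data share the same leading constants in \eqref{asymptrho} and \eqref{asymptalpha} as those of $L$.

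The decisive step is to exploit the pairing built into \eqref{cont3}: every $L$-term is matched with a $\tilde L$-term, and within each group every point $\rho_{ks}$ is matched with $\tilde\rho_{ks}$. Telescoping (adding and subtracting the mixed products) rewrites a matched pair through the differences $D(x,\cdot,\rho_{ks})-D(x,\cdot,\tilde\rho_{ks})$, $f(\rho_{ks})-f(\tilde\rho_{ks})$ and $m_{ks}^{-1}\al_{ks}-\tilde m_{ks}^{-1}\tilde\al_{ks}$. The first is controlled by $\mbox{diam}(G_k)$ times the $\theta$-difference bound for $D$; the second is, by definition, the divided-difference component of $\|f\|_{\mathcal{B}(G_k)}$ and contributes a factor $\mbox{diam}(G_k)$; the third is handled not term by term but after summation over $s\in J_q$, where Lemma~\ref{lem:asymptalpha} cancels the leading parts. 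Each mechanism supplies an extra factor $\mbox{diam}(G_k)\le C|k|^{-1}$ (or an extra $(|n-k|+1)^{-1}$ from differencing $D$) beyond the naive estimate $\frac{C(|n|+|k|)}{|k|(|n-k|+1)}$, which by itself is only logarithmically divergent. The resulting summands are of the summable types $\frac{|n|+|k|}{|k|^2(|n-k|+1)}$ and $\frac{|n|+|k|}{|k|(|n-k|+1)^2}$, and splitting the sum at $|k|\le|n|/2$ and $|k|>|n|/2$ shows that $\sum_{|k|>0}\|R_{k,n}(x)\|_{op}=O\bigl(|n|^{-1}\ln|n|\bigr)$, hence bounded uniformly in $n$.

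I expect the main obstacle to be the third mechanism, that is, the treatment of groups of genuinely multiple or asymptotically coalescing eigenvalues: there the individual residues $m_{ks}^{-1}\al_{ks}$ need not be mutually close, so the required smallness cannot be read off term by term and must instead be produced from the group-summed asymptotics of Lemma~\ref{lem:asymptalpha} combined with the divided-difference structure of $\mathcal{B}(G_k)$. This is exactly the situation for which the grouped Banach space $\mathcal{B}$ and the matching choice of the model pencil $\tilde L$ were designed, and where the present matrix argument departs from the scalar one. The overall scheme follows the half-line case \cite{BF13}; the finite-interval setting changes only the bookkeeping caused by the two-sided index $k\in\mathbb{Z}\setminus\{0\}$.
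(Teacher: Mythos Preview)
Your plan is essentially the paper's own proof: reduce everything to a uniform bound on the block operators $R_{k,n}(x)$, obtain this bound by telescoping the sum over $G_k$ so that one term carries the cancelled sum of weights $\sum_j\al_j=O(k^{-2})$ while the remaining terms carry factors $\mbox{diam}(G_k)=O(|k|^{-1})$ coming from differences of $f$ or of $D$, and then sum the resulting estimate $\|R_{k,n}(x)\|_{op}\le C(|n|+|k|)\,k^{-2}(|n-k|+1)^{-1}$ over $k$. The only organizational difference is that the paper telescopes around a single base point $s_1\in G_k$ rather than pairing each $\rho_{ks}$ with its $\tilde\rho_{ks}$; your pairwise scheme is equivalent but forces a second telescoping when you want to pull the $s$-dependence of $f$ and $D$ outside the sum $\sum_{s\in J_q}(m_{ks}^{-1}\al_{ks}-\tilde m_{ks}^{-1}\tilde\al_{ks})$, which is exactly the basepoint trick in disguise.

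One small correction: the sum $\sum_{|k|>0}\|R_{k,n}(x)\|_{op}$ is $O(1)$ but not $O(|n|^{-1}\ln|n|)$; the contribution from small $|k|$ (say $|k|\le |n|/2$) is already a positive constant independent of $n$. This does not affect the conclusion.
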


\begin{proof}

Consider two sets
$G_n=\{g_i\}_{i=1}^r$ and $G_k=\{ s_j \}_{j = 1}^l$ю The operator $R_{n,0}(x,\rho)$ acts in $\mathcal{B}(G_n)$ in the following way: 
\begin{equation} \label{smeq4}
 	h(g_i) = f R_{k, n}(x) = \sum\limits_{j=1}^l f(s_j)\al_j D(x,g_i,s_j).
\end{equation}
where $\al_j=m_{n q}^{-1}\al_{n q}$ for $g_j=\rho_{n q}$ and $\al_j=-\tilde m_{n q}^{-1}\tilde\al_{n q}$ for $g_j=\tilde\rho_{n q}$.
\footnote{Here, in order to represent the operator in a more convenient way, 
we consider $G_n$ as a multiset.
For example, if $\rho_{nq} = \tilde \rho_{nq}$, they correspond to two equal elements in $G_n$: 
$g_i$ and $g_j$, $f(g_i) = f(g_j)$. Clearly, this does not affect the norm $\| f \|_{\mathcal{B}(G_n)}$.}

Let us prove that
\begin{equation} \label{smeq5}
	h \in \mathcal{B}(G_n), \quad \| h \|_{\mathcal{B}(G_n)} \le \frac{C(|n| + |k|)}{k^2 (|n - k| + 1)} \| f \|_{\mathcal{B}(G_k)},
\end{equation}
Using \eqref{smeq4}, we get
\begin{multline*} 
h(g_1)=f(s_1)\left(\sum\limits_{j=1}^l\al_j\right) D(x,g_1,s_1)+f(s_1)\sum\limits_{j=2}^l\al_j\bigl(D(x,g_1,s_j)-D(x,g_1,s_1)\bigr) \\
+\sum\limits_{j=2}^l\bigl(f(s_j)-f(s_1)\bigr)\al_j D(x,g_1,s_j),
\end{multline*}
\vspace{-1cm}
\begin{multline*}
 	h(g_i) - h(g_1) = \sum_{j = 1}^l f(s_j) \al_j \left( D(x, g_i, s_j) - D(x, g_1, s_j) \right) \\ =
 	f(s_1) \left( \sum_{j = 1}^l \al_j \right) \left( D(x, g_i, s_1) - D(x, g_1, s_1) \right)  +
 	f(s_1) \sum_{j = 2}^l \al_j \bigl( D(x, g_i, s_j) - D(x, g_1, s_j) \\ - D(x, g_i, s_1) + D(x, g_1, s_1)
 	\bigr) + \sum_{j = 2}^l (f(s_j) - f(s_1)) \al_1 \left( D(x, g_i, s_j) - D(x, g_1, s_j)\right).
\end{multline*}
Since $f \in \mathcal{B}(G_k)$, we have
\begin{equation*}
\bigl\|f(s_1)\bigr\|\le\|f\|_{\mathcal{B}(G_k)},\quad\bigl\|f(s_j)-f(s_1)\bigr\|\le |k|^{-1} \|f\|_{\mathcal{B}(G_k)}.
\end{equation*} 
Using \eqref{asymptrho}, \eqref{asymptalpha}, \eqref{estD} and Schwarz's lemma (see \cite{FY01}), it is easy to obtain the
following estimates:
\begin{equation*}
\left\|\sum\limits_{j=1}^l\al_j\right\|\le\frac{C}{k^2},\quad\left\|\al_j\right\|\le\frac{C}{|k|},
\end{equation*}
\begin{equation*}
\bigl\| D(x, g_i, s_j) \bigr\| \le \frac{C(|n| + |k|)}{(|n - k| + 1)}, \quad
\bigl\| D(x, g_i, s_j) - D(x, g_i, s_1)\bigr\| \le \frac{C(|n| + |k|)}{|k|(|n - k| + 1)},  
\end{equation*}
\begin{equation*}
\bigl\|D(x,g_i,s_j)-D(x,g_1,s_j)\bigr\|\le \frac{C(|n| + |k|)}{|n|(|n - k| + 1)},
\end{equation*}
\begin{equation*}
\bigl\|  D(x, g_i, s_j) - D(x, g_1, s_j) - D(x, g_i, s_1) + D(x, g_1, s_1) \bigr\|\le \frac{C(|n| + |k|)}{|n| |k|(|n - k| + 1)}.
\end{equation*}
Combining these relations and using the definition of $\mathcal{B}(G_n)$--norm, we arrive at \eqref{smeq5}.

Substitute \eqref{smeq5} into \eqref{defRsimp}:
\begin{equation*}
\bigl\|\psi R(x) \bigr\| = \underset{n\in\mathbb{Z}}{\sup}\Bigl\|\bigl(\psi R(x)\bigr)_n\Bigr\| \le 
\| \psi \| \left( \sum_{|k| > 0} \frac{|n| + |k|}{k^2 (|n - k| + 1)} \right) \le C \| \psi \|.
\end{equation*}
Hence
$\bigl\| R(x) \bigr\|_{\mathcal{B} \to \mathcal{B}} < \infty.$
\end{proof}

Note that \eqref{cont4} is equivalent to the relation
\begin{equation} \label{fromcont4}
   R(x) - \tilde R(x) + \tilde \Theta(x) + \tilde R(x) R(x) = 0,	
\end{equation}
where the operator $\tilde \Theta(x) \colon \mathcal{B} \to \mathcal{B}$ for each fixed $x \in [0, \pi]$
is defined by the following formula:
\begin{equation} \label{defTheta}
 	\psi \tilde \Theta(x) = \Biggl[\sum_{n \in w} \sum_{q = 1}^m \bigl\{ \psi(\rho_{nq}) m^{-1}_{nq} \al_{nq}\tilde \vv^*(x, \rho_{nq}) -
 	\psi(\tilde \rho_{nq}) \tilde m^{-1}_{nq} \tilde \al_{nq} \tilde \vv^*(x, \rho_{nq})
 	\bigl\} \Biggr] \tilde \Lambda(x) \vv(x, \rho)_{|\mathcal{B}}.
\end{equation}

\begin{thm} \label{thm:homo}
For each fixed $x \in [0, \pi]$, the subspace of solutions of the homogeneous equation
\begin{equation} \label{homo}
\beta (I + \tilde R(x)) = 0, \quad \beta \in \mathcal{B},
\end{equation}
has dimension $m - \mbox{rank}\, \Omega(x)$.
In the particular case $\det \Omega(x) \ne 0$, equation~\eqref{homo} has only the trivial solution.
\end{thm}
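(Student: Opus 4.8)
The plan is to reduce the infinite-dimensional equation \eqref{homo} to a finite-dimensional linear-algebra problem governed by the matrix $\Omega(x)$. The central observation is that every solution of \eqref{homo} must be a constant left multiple of the fixed element $\psi(x) = \vv(x, \rho)_{|\mathcal{B}}$, after which the main equation \eqref{eqpsi} converts \eqref{homo} into the matrix equation $C \Omega(x) = 0$.

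First I would combine the two operator relations of Section~3. The identity \eqref{fromcont4} rearranges to $(I + \tilde R(x))(I - R(x)) = I + \tilde \Theta(x)$, so multiplying $\beta(I + \tilde R(x)) = 0$ on the right by $(I - R(x))$ gives $\beta(I + \tilde \Theta(x)) = 0$, i.e. $\beta = - \beta \tilde \Theta(x)$. By the explicit formula \eqref{defTheta}, for any input the output of $\tilde \Theta(x)$ is a constant left multiple of $\vv(x, \rho)_{|\mathcal{B}} = \psi(x)$; hence every solution of \eqref{homo} has the form $\beta = C \psi(x)$ with $C \in \mathbb{C}^{m \times m}$ constant. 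Only boundedness of $R(x)$ and $\tilde R(x)$ (Lemma~\ref{lem:Rbound}) is needed here, while the finite rank of $\tilde \Theta(x)$ is what collapses the problem to linear algebra.

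Next I would decide which matrices $C$ produce solutions. Since the operators in \eqref{smeq4} commute with left multiplication by constant matrices, $\beta = C\psi(x)$ solves \eqref{homo} iff $C\,[\psi(x)(I + \tilde R(x))] = 0$, and by \eqref{eqpsi} this equals $C \Omega(x) \tilde \psi(x) = 0$. The key auxiliary fact is the \emph{regularity} of the model element $\tilde \psi(x) = \tilde\vv(x, \rho)_{|\mathcal{B}}$: a constant matrix $M$ with $M \tilde \psi(x) = 0$ must vanish. Granting this, $C \Omega(x) \tilde \psi(x) = 0$ is equivalent to $C \Omega(x) = 0$. The same regularity of $\psi(x)$ ($C \psi(x) = 0 \Rightarrow C = 0$) makes $C \mapsto C \psi(x)$ injective, so this map is a linear bijection between $\{ C \in \mathbb{C}^{m \times m} \colon C \Omega(x) = 0 \}$ and the solution subspace of \eqref{homo}. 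Because the solution space is a left $\mathbb{C}^{m \times m}$-module, it splits into rows, and its dimension is that of the left null space $\{ c \in \mathbb{C}^{1 \times m} \colon c \Omega(x) = 0 \}$, namely $m - \mbox{rank}\, \Omega(x)$. When $\det \Omega(x) \ne 0$ we have $\mbox{rank}\, \Omega(x) = m$ and only $\beta = 0$ remains.

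The main obstacle is the regularity lemma, i.e. that no nonzero constant matrix left-annihilates all the values $\{ \vv(x, \rho_{ns}), \vv(x, \tilde \rho_{ns}) \}$ (and similarly for $\tilde\vv$). At $x = 0$ it is immediate since $\vv(0, \rho) = \tilde\vv(0, \rho) = I$. For general $x$ I would invoke the asymptotics \eqref{asymptphi} along the real eigenvalues $\rho_{ns} = n + \om_s + O(n^{-1})$: letting $n \to \pm \infty$ and using the asymptotic independence of $e^{i \rho_{ns} x}$ and $e^{-i \rho_{ns} x}$ over the two families $n \to + \infty$ and $n \to - \infty$, the relation $M \vv(x, \rho_{ns}) \to 0$ forces $M P_-(x)(I - h_1) = 0$ and $M P_+(x)(I + h_1) = 0$; since $P_\pm(x)$ are invertible by \eqref{propP} and $I \pm h_1$ are invertible by condition~(I), we conclude $M = 0$. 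Handling the oscillating cross term $e^{\pm 2 i \rho_{ns} x}$ carefully (passing to subsequences of $n$ with distinct cluster values) is the delicate step; the remainder is routine once the operator identity from \eqref{fromcont4} is in place.
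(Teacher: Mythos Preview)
Your proposal is correct and follows the same route as the paper: multiply \eqref{homo} on the right by $(I-R(x))$, use \eqref{fromcont4} and the form \eqref{defTheta} of $\tilde\Theta(x)$ to conclude that any solution is $K\psi(x)$ for some constant $K$, then apply \eqref{eqpsi} to reduce to $K\Omega(x)\tilde\psi(x)=0$ and read off the dimension from the left kernel of $\Omega(x)$. You are in fact more explicit than the paper, which simply asserts the final bijection without justifying the two regularity steps (that $K\Omega(x)\tilde\psi(x)=0$ forces $K\Omega(x)=0$, and that $K\mapsto K\psi(x)$ is injective) that you handle via the asymptotics \eqref{asymptphi}.
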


\begin{proof}
Let $\beta \in \mathcal{B}$ be a solution of \eqref{homo}. Then by \eqref{fromcont4},
$$
 	\beta (I + \tilde R(x)) (I - R(x)) = \beta - \beta \tilde \Theta(x) = \beta - K(x) \psi(x) = 0,
$$
where $\psi(x) = \vv(x, \rho)_{|\mathcal{B}}$ and $K(x)$ is a row vector, which can be found from \eqref{defTheta}.
Using \eqref{eqpsi}, we get
$$
 	\beta (I + \tilde R(x)) = K(x) \psi(x) (I + \tilde R(x)) = K(x) \Omega(x) \tilde \psi(x) = 0.
$$
Let $\mathcal{K}(x)$ be the space of solutions of the equation $K(x) \Omega(x) = 0$ for each fixed $x \in [0, \pi]$.
It is clear that $\dim \mathcal{K}(x) = m - \mbox{rank} \, \Omega(x)$, and $\beta$ is
a solution of \eqref{homo} if and only if $\beta = K(x) \psi(x)$, $K(x) \in \mathcal{K}(x)$,
this yields the assertion of the Theorem.
\end{proof}

\begin{thm} \label{thm:main}
For each fixed $x \in [0, \pi]$, such that $\det \Omega(x) \ne 0$,
equation
\begin{equation} \label{main}
z(x) (I + \tilde R(x)) = \tilde \psi(x),
\end{equation}
has a unique solution $z(x) = (\Omega(x))^{-1} \psi(x)$ in the Banach space $\mathcal{B}$. 	
\end{thm}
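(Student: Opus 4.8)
The plan is to read off both the explicit solution and its uniqueness directly from two facts already in hand: the contour identity \eqref{eqpsi} and the kernel description in Theorem~\ref{thm:homo}. For existence, recall that \eqref{eqpsi} reads $\Omega(x)\tilde\psi(x) = \psi(x)(I + \tilde R(x))$, where $\Omega(x)$ acts by left multiplication by a constant $m \times m$ matrix and $I + \tilde R(x)$ acts on the right. Under the hypothesis $\det\Omega(x) \neq 0$ the matrix $(\Omega(x))^{-1}$ exists, and left-multiplying \eqref{eqpsi} by it yields
$$
\tilde\psi(x) = \bigl((\Omega(x))^{-1}\psi(x)\bigr)(I + \tilde R(x)).
$$
The key point is that left multiplication by the fixed matrix $(\Omega(x))^{-1}$ commutes with the right action of $I + \tilde R(x)$: by the explicit form of $\tilde R(x)$ coming from \eqref{cont1} (parallel to \eqref{defRsimp}), this operator is built solely from right multiplication by the matrices $m_{nq}^{-1}\al_{nq}$, $\tilde m_{nq}^{-1}\tilde\al_{nq}$ and the $\tilde D$-factors, hence it is a homomorphism of the left $\mathbb{C}^{m \times m}$-module structure of $\mathcal{B}$. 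Since $\psi(x) \in \mathcal{B}$ and left multiplication by a constant matrix is bounded on $\mathcal{B}$, the element $z(x) := (\Omega(x))^{-1}\psi(x)$ lies in $\mathcal{B}$ and solves \eqref{main}.

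For uniqueness, I would note that any two solutions differ by a solution of the homogeneous equation. If $z_1(x)$ and $z_2(x)$ both satisfy \eqref{main}, then $\beta := z_1(x) - z_2(x)$ satisfies $\beta(I + \tilde R(x)) = 0$, which is precisely \eqref{homo}. By Theorem~\ref{thm:homo} the dimension of the space of solutions of \eqref{homo} equals $m - \mbox{rank}\, \Omega(x)$, and this is zero because $\det\Omega(x) \neq 0$ forces $\mbox{rank}\, \Omega(x) = m$. Therefore $\beta = 0$, so $z_1(x) = z_2(x)$, and together with the previous paragraph this proves that $z(x) = (\Omega(x))^{-1}\psi(x)$ is the unique solution.

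The only genuinely nontrivial step is the commutation of $(\Omega(x))^{-1}$ with $I + \tilde R(x)$; everything else is an immediate appeal to \eqref{eqpsi} and Theorem~\ref{thm:homo}. I would therefore devote most of the written proof to making the bimodule structure of $\mathcal{B}$ precise and confirming that $\tilde R(x)$ acts purely by right multiplication, so that the left-inversion of $\Omega(x)$ is unambiguous.
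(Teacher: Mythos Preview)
Your proposal is correct and matches the paper's intended argument: the paper actually omits an explicit proof of Theorem~\ref{thm:main}, treating it as an immediate consequence of \eqref{eqpsi} together with Theorem~\ref{thm:homo}. Your write-up fills in precisely these details, including the point about left multiplication by $(\Omega(x))^{-1}$ commuting with the right action of $\tilde R(x)$, which the paper leaves tacit.
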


Equation \eqref{main} is called {\it the main equation} of Inverse Problem 1.

\begin{remark}
For the matrix Sturm-Liouville operator (when $Q_1(x) \equiv 0$),
we have $\Omega(x) = I$, $\Lambda(x) = 0$. Therefore, the main equation
is uniquely solvable for all $x$ in $[0, \pi]$ and
by \eqref{fromcont4} the operators $(I + \tilde R(x))$ and $(I + R(x))$ are inverses of each other.
\end{remark}

\begin{remark} \label{rem:Fredholm}
The operators $R(x)$ and $\tilde R(x)$ can be approximated by sequences of finite-dimensional operators
in $\mathcal{B}$. Indeed, let
$R^s_{k,n} = R_{k, n}$ for all $n \in \mathbb{Z}$, $1 \le |k| \le s$, and all the other components of
$R^s$ be equal to zero. Using the estimates for $R_{k,n}$ from the proof of Lemma~\ref{lem:Rbound},
it is easy the show that
$\lim\limits_{s \to \infty} \| R^s - R \|_{\mathcal{B} \to \mathcal{B}} = 0$.

Therefore the operator $\tilde R(x)$ is compact,
and one can apply the first Fredholm theorem to the operator $(I + \tilde R(x))$ and obtain from
Theorem~\ref{thm:homo}
even more general result than Theorem~\ref{thm:main}:
if $\det \Omega(x) \ne 0$,
the operator $(I + \tilde R(x))$
has a bounded inverse on $\mathcal{B}$.
\end{remark}

\bigskip

{\large \bf 5. Algorithm for the solution}

\bigskip

In this section, using the main equation \eqref{main} and Theorem~\ref{thm:main},
we provide a constructive algorithm for the solution of Inverse Problem~1.

Let $x \in [0, \pi]$ be a fixed point, such that $\det \Omega(x) \ne 0$.
Suppose we have solved equation \eqref{main} and found $z(x)$. Multiplying \eqref{cont1}
by $(\Omega(x))^{-1}$, we get a formula to construct $z(x, \rho) := (\Omega(x))^{-1} \vv(x, \rho)$ from
$z(x) = z(x, \rho)_{|\mathcal{B}}$.

In a similar way to Lemma~\ref{lem:contour}, it can be proved that
\begin{equation*}
 	\Omega(x) \tilde \Phi(x, \rho) =  \Phi(x, \rho) + 
 	\sum_{n \in w} \sum_{q = 1}^m \Bigl\{ \vv(x, \rho_{nq}) m_{nq}^{-1} \al_{nq} \tilde E(x, \rho, \rho_{nq}) -
 	\vv(x, \tilde \rho_{nq}) \tilde m_{nq}^{-1} \tilde \al_{nq} \tilde E(x, \rho, \tilde \rho_{nq}) \Bigr\},
\end{equation*}
$$
 	\tilde E(x, \rho, \theta) := \frac{\langle \tilde \vv^*(x, \theta), \tilde \Phi(x, \rho) \rangle}{\rho - \theta}.
$$
Therefore, knowing $z(x, \rho)$, we can build the matrix function
$w(x, \rho) := (\Omega(x))^{-1} \Phi(x, \rho)$ by the formula
\begin{equation} \label{defw}
 	w(x, \rho) = \tilde \Phi(x, \rho) -  \sum_{n \in w} \sum_{q = 1}^m \Bigl\{ z(x, \rho_{nq}) m_{nq}^{-1} \al_{nq} \tilde E(x, \rho, \rho_{nq}) -
 	z(x, \tilde \rho_{nq}) \tilde m_{nq}^{-1} \tilde \al_{nq} \tilde E(x, \rho, \tilde \rho_{nq}) \Bigr\}.
\end{equation}

Denote $z^*(x, \rho) := z^{\dagger}(x, \bar \rho)$, $w^*(x, \rho) := w^{\dagger}(x, \bar \rho)$.
Then $\vv^*(x, \rho) = \vv^{\dagger}(x, \bar \rho) = z^*(x, \rho) \Omega^{\dagger}(x)$,
$\Phi^*(x, \rho) = \Phi^{\dagger}(x, \bar \rho) = w^*(x, \rho) \Omega^{\dagger}(x)$.
Using \eqref{inverseform}, we get
$$
 	0 = \vv \Phi^* - \Phi \vv^* = \Omega (z w^* - w z^*) \Omega^{\dagger}.
$$
$$
 	I = \vv {\Phi^*}' - \Phi {\vv^*}' = \Omega (z w^* - w z^*) {\Omega^\dagger}' + \Omega (z {w^*}' - w {z^*}') \Omega^{\dagger}.
$$
Therefore,
$$
 	z w^* - w z^* = 0, 		
$$
\begin{equation} \label{Omega2}
 	(z {w^*}' - w {z^*}')^{-1} = \Omega^{\dagger} \Omega. 	
\end{equation}

It follows from \eqref{wronphi} that
$\langle z^* \Omega^{\dagger}, \Omega z \rangle = 0.$
Consequently,
\begin{equation} \label{Omega3}
  	z^* ({\Omega^{\dagger}}' \Omega - \Omega^{\dagger} \Omega') z = z^* \Omega^{\dagger} \Omega z' - {z^*}' \Omega^{\dagger} \Omega z.
\end{equation}
If the functions $z$, $z^*$, $w$ and $w^*$ are known, one can find $\Omega^{\dagger} \Omega$ from \eqref{Omega2}.
Then the right-hand side of \eqref{Omega3} is known, and we can calculate ${\Omega^{\dagger}}' \Omega - \Omega^{\dagger} \Omega'$.
Differentiating $\Omega^{\dagger} \Omega$, we find ${\Omega^{\dagger}}' \Omega + \Omega^{\dagger} \Omega'$. Then we can find
$\Omega^{\dagger} \Omega'$ and calculate
$$
	a(x) := \Omega^{-1} \Omega' = (\Omega^{\dagger} \Omega)^{-1} \Omega^{\dagger} \Omega'.
$$
Solving the Cauchy problem
$$
 	 \Omega'(x) = a(x) \Omega(x), \quad \Omega(0) = I,
$$
we get $\Omega(x)$ and then can find $\vv(x, \rho)$.

We arrive at the following algorithm for the solution of Inverse Problem 1.

\medskip

{\bf Algorithm 2.} Let the spectral data of the pencil $L$ be given.

\begin{enumerate}
\item Choose a model pencil $\tilde L$, using Algorithm 1.
\item Construct the space $\mathcal{B}$, associated with the spectral data of $L$ and $\tilde L$,
$\tilde \psi(x)$ and $\tilde R(x)$.
\item Find $z(x)$, solving the main equation \eqref{main}.
\item Construct $z(x, \rho)$ and $w(x, \rho)$, using \eqref{cont1} and \eqref{defw}, then get $\Omega^{\dagger}(x) \Omega(x)$
via \eqref{Omega2}.
\item Find $\Omega(x)$ and $\vv(x, \rho) = \Omega(x) z(x, \rho)$.	
\item Substituting $\vv(x, \rho)$ into \eqref{eqv} and \eqref{BC}, obtain
$Q_s(x)$, $h_s$, $H_s$, $s = 1, 2$.
\end{enumerate}

Note that Algorithm 2 works only for the case $\det \Omega(x) \ne 0$
for all $x \in [0, \pi]$.
Further we provide a modification of this algorithm for the general case.

First let us modify step 4 of Algorithm 1 to construct a pencil $\tilde L$
with spectral data satisfying \eqref{asymptrho} and \eqref{asymptalpha}
(with the same constants as the spectral data of $L$) and moreover, satisfying the condition
\begin{equation} \label{Qde}
 	\tilde Q_1(x) = Q_1(x), \quad 0 < x \le \de < \pi.
\end{equation}
Note that the equation
\begin{equation} \label{eqvH1}
 	(I + \tilde H_1)^{-1} (I - \tilde H_1) = \mathcal{O},
\end{equation}
where the matrix $\mathcal{O}$ is unitary, has a solution
\begin{equation} \label{soleqvH1}
 	\tilde H_1 = (I + \mathcal{O})^{-1} (I - \mathcal{O}),
\end{equation}
if and only if $\det(I + \mathcal{O}) \ne 0$. In this case, the matrix $\tilde H_1$ satisfies (I) and (II), i.e.
$\det(I \pm \tilde H_1) \ne 0$ and $\tilde H_1 = -\tilde H_1^{\dagger}$.

\medskip

{\bf Algorithm 3.} Let $SD$ of $L$ and the potential $Q_1(x)$ for $0 < x \le \de < \pi$ be given.

\begin{enumerate}
\item Construct $\tilde h_1$ and $\tilde A$ by steps 1--3 of Algorithm 1.
\item Look for $\tilde Q_1(x)$ in the form
$$
 	\tilde Q_1(x) = \left\{
 	\begin{array}{ll}
 		Q_1(x),  & x \le \de, \\
 		Q_1(\de) + i \kappa (x - \de) I & x \ge \de,
 	\end{array}
 	\right.
 	\quad \kappa \in \mathbb{R}.
$$
Then $\tilde P_{\pm}(x) = \exp\left\{ \pm \bigl(Q_1(\de) (x - \de) + \frac{i \kappa}{2} (x - \de)^2 I\bigr) \right\} P_{\pm}(\de)$,
and $\tilde H_1$ must satisfy the relation
$$
 	(I + \tilde H_1)^{-1}(I - \tilde H_1) = \tilde P_{-}(\pi) \tilde A (I - \tilde h_1)(I + \tilde h_1)^{-1} \tilde P_{+}^{-1}(\pi).
$$
It is equation \eqref{eqvH1} with $\mathcal{O}$ in the form
$$
 	\mathcal{O}(\kappa) = \exp(-i\kappa(\pi-\de)^2) \mathcal{U}, \quad \mathcal{U}^\dagger = \mathcal{U}^{-1}.
$$
Take such $\kappa$, that $\det (\mathcal{O}(\kappa) + I) \ne 0$.
\item Construct $\tilde H_1$ via \eqref{soleqvH1}.
\item Put $\tilde Q_0(x) = \tilde h_0 = \tilde H_0 = 0$, $x \in [0, \pi]$.
\end{enumerate}

\medskip

Note that if one chooses $\kappa$ bounded (say, $|\kappa| < 1$), then $\| \tilde Q_1(x) \| \le C$, where $C$
depends only on $Q_1$ and does not depend on $\de$.

\medskip

{\bf Algorithm 4.} {\it Solution of Inverse Problem 1.}
Let the spectral data of the pencil $L$ be given.

\begin{enumerate}
\item Put $k := 1$, $\de_0 := 0$.
\item We assume that for $x \in [0, \de_{k - 1}]$ the potential $Q_1(x)$
is already known. Choose the model pencil $\tilde L$ satisfying \eqref{Qde},
using Algorithm 3.
\item Step 2 of Algorithm 2.
\item Choose the largest $\de_k$, $\de_{k - 1} < \de_k \le \pi$, such that the main equation \eqref{main} has a unique solution
for $x \in (\de_{k-1}, \de_{k})$, find this solution.
\item Steps 4--5 of Algorithm 2 for $x \in (\de_{k - 1}, \de_k)$.
\item Substituting $\vv(x, \rho)$ into \eqref{eqv} and \eqref{BC}, obtain
$Q_s(x)$, $x \in (\de_{k - 1}, \de_k)$, $h_s$, $s = 1, 2$. If $\de_k = \pi$, construct also $H_s$, $s = 1,2$.
\item If $\de_k = \pi$, then terminate the algorithm, otherwise put $k := k + 1$ and go to step 2.
\end{enumerate}

\begin{thm}
Algorithm 4 terminates after a finite number of steps.
\end{thm}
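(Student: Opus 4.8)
The plan is to show that the step lengths $\de_k-\de_{k-1}$ admit a uniform positive lower bound, independent of $k$ and of the already-recovered portion $[0,\de_{k-1}]$. Since the interval $[0,\pi]$ has finite length, only finitely many steps can then be performed before $\de_k=\pi$, at which point the algorithm stops by its step~7.

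First I would reformulate step~4 of Algorithm~4 analytically. By Theorem~\ref{thm:main} and Remark~\ref{rem:Fredholm}, the main equation \eqref{main} is uniquely solvable at a point $x$ precisely when $\det\Omega(x)\ne0$; indeed, if $\det\Omega(x)=0$, then Theorem~\ref{thm:homo} produces nontrivial solutions of the homogeneous equation \eqref{homo} and uniqueness fails. Hence $\de_k$ is the largest number such that $\det\Omega(x)\ne0$ for all $x\in(\de_{k-1},\de_k)$, where $\Omega$ is built from the model pencil $\tilde L$ chosen at the $k$-th step. The decisive observation is that $\Omega(\de_{k-1})=I$: by \eqref{Qde} the model potential satisfies $\tilde Q_1(x)=Q_1(x)$ on $[0,\de_{k-1}]$, so $P_\pm$ and $\tilde P_\pm$ solve the same Cauchy problem \eqref{cauchyP} there and coincide up to $\de_{k-1}$; since both pencils obey condition (II), relation \eqref{propP} gives $P_\pm(\de_{k-1})P_\pm^\dagger(\de_{k-1})=I$, and \eqref{defOmega} yields $\Omega(\de_{k-1})=\tfrac12(I+I)=I$. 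Thus $\det\Omega$ is nonzero at the left endpoint of every step, and by continuity $\de_k>\de_{k-1}$ always.

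To upgrade this strict inequality to a uniform gap, I would estimate $\Omega'$. Differentiating $P_\pm\tilde P_\pm^\dagger$ via \eqref{cauchyP} together with $Q_1^\dagger=-Q_1$ and $\tilde Q_1^\dagger=-\tilde Q_1$ gives $\tfrac{d}{dx}\bigl(P_\pm\tilde P_\pm^\dagger\bigr)=\pm\bigl(Q_1P_\pm\tilde P_\pm^\dagger-P_\pm\tilde P_\pm^\dagger\tilde Q_1\bigr)$. Because $P_\pm$ and $\tilde P_\pm$ are unitary, hence bounded in the chosen matrix norm by the dimensional constant $m$, and because $\|\tilde Q_1(x)\|\le C$ with $C$ depending only on $Q_1$ and \emph{not} on $\de$ — exactly the uniform bound guaranteed by the construction in Algorithm~3 — one obtains $\|\Omega'(x)\|\le L_0$ with a constant $L_0$ independent of $k$ and $\de_{k-1}$. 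Consequently $\|\Omega(x)-I\|=\|\Omega(x)-\Omega(\de_{k-1})\|\le L_0|x-\de_{k-1}|$, so whenever $|x-\de_{k-1}|<\eta:=(2L_0)^{-1}$ we have $\|\Omega(x)-I\|<1$, whence $\Omega(x)$ is invertible by the Neumann series and $\det\Omega(x)\ne0$. Therefore $\de_k\ge\min(\de_{k-1}+\eta,\pi)$; by induction $\de_k\ge\min(k\eta,\pi)$, and after at most $\lceil\pi/\eta\rceil$ steps one has $\de_k=\pi$ and the algorithm terminates.

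The main obstacle is the \emph{uniformity} of the derivative bound $L_0$: without a bound independent of the step, the admissible increment could shrink to zero as $\de_{k-1}\to\pi$, permitting a Zeno-type accumulation of infinitely many steps below $\pi$. This is precisely why Algorithm~3 is designed so that $\|\tilde Q_1\|$ stays controlled by a constant independent of the matching point $\de$. Verifying that this construction remains available at every step — in particular that a suitable real $\kappa$ with $|\kappa|<1$ and $\det(\mathcal O(\kappa)+I)\ne0$ can always be selected, so that $\tilde H_1$ in \eqref{soleqvH1} exists and conditions (I), (II) persist — and that it keeps $\Omega'$ uniformly bounded, is the heart of the argument.
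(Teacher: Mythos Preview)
Your proof is correct and follows essentially the same route as the paper: both arguments show that $\Omega(\de_{k-1})=I$ (since $\tilde Q_1=Q_1$ on $[0,\de_{k-1}]$ forces $P_\pm=\tilde P_\pm$ there) and then establish a Lipschitz estimate $\|\Omega(x)-I\|\le C_{Q_1}\,(x-\de_{k-1})$ with a constant depending only on $Q_1$, via the uniform bound on $\tilde Q_1$ guaranteed by Algorithm~3; this yields a uniform positive step size and hence finite termination. The only cosmetic difference is that the paper obtains the Lipschitz bound from the integral representations of $P_\pm,\tilde P_\pm$ directly, whereas you differentiate $P_\pm\tilde P_\pm^\dagger$ and bound $\Omega'$ pointwise --- these are equivalent computations.
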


\begin{proof}
Consider the pencil $\tilde L$ constructed by Algorithm 3.

We have $P_{\pm}(x) = \tilde P_{\pm}(x)$ for $x \in [0, \de]$.
For $x > \de$, the following integral representations are valid
$$
 	P_{\pm}(x) = P_{\pm}(\de) \pm \int_{\de}^{x} Q_1(t) P_{\pm}(t) \, dt, \quad
 	\tilde P_{\pm}(x) = P_{\pm}(\de) \pm \int_{\de}^x \tilde Q_1(x) \tilde P_{\pm}(t) \, dt.
$$
Then $\| P_{\pm}(x) \|, \| \tilde P_{\pm}(x) \| \le C$, where the constant $C$ depends only on $Q_1$.
Using \eqref{defOmega}, we obtain
$$
 	\Omega(x) = I + B_{\de}(x), \quad \| B_{\de}(x) \| \le C_{Q_1}(x - \de).
$$

Choose an integer $s$ such that $\pi / s < 1 / (2 C_{Q_1})$.
Put $x_k := k \pi / s$, $k = \overline{0, s}$.
Then if we construct $\tilde Q_1$ by Algorithm 3 with $\de = x_{k - 1}$,
we get $\| B_{\de}(x) \| < 1/2$, $x \in (x_{k-1}, x_k)$. Hence $\det \Omega(x) \ne 0$ for $x \in (x_{k - 1}, x_k)$
and the main equation \eqref{main} is uniquely solvable in this interval.

It is easy to show that $\de_k \ge x_k$ for all $k = \overline{1, s}$. Since $x_s = \pi$,
the number of steps in Algortihm~4 is finite.
\end{proof}

\bigskip

{\large \bf 6. Multiple poles of the Weyl matrix}

\bigskip

In this section, we consider the general case, when $N$ is not necessary equal zero and the Weyl matrix 
can have multiple poles.
For convenience, we assume that for $|n| \ge N$ the assertion of Theorem~\ref{thm:SD} holds for 
both collections $\{ \rho_{nq}, \al_{nq}\}$ and $\{ \tilde \rho_{nq}, \tilde \al_{nq}\}$.

Let a contour $\mathcal{C}_0$ enclose the set $\{ \rho_{nq}, \, \tilde \rho_{nq} \}_{|n| < N}$ and do not contain
other eigenvalues $\rho_{nq}$, $\tilde \rho_{nq}$, $|n| \ge N$, inside or on the boundary.
Then instead of relations \eqref{cont3} and \eqref{cont4}, we get
\begin{multline} \label{cont3int}
 	\tilde \Omega(x) \vv(x, \rho) =  \tilde \vv(x, \rho) - \frac{1}{2 \pi i} \int_{\mathcal{C}_0}
 	\tilde \vv(x, \theta) \left(M(\theta) - \tilde M(\theta)\right) D(x, \rho, \theta) \, d \theta \\ -
 	\sum_{|n| \ge N} \sum_{q = 1}^m \Bigl\{ \tilde \vv(x, \rho_{nq}) m_{nq}^{-1} \al_{nq} D(x, \rho, \rho_{nq}) -
 	\tilde \vv(x, \tilde \rho_{nq}) \tilde m_{nq}^{-1} \tilde \al_{nq} D(x, \rho, \tilde \rho_{nq}) \Bigr\},
\end{multline}
\begin{multline} \label{cont4int}
 	D(x, \rho, \theta) - \tilde D(x, \rho, \theta)
 	+ \tilde \vv^*(x, \theta) \tilde \Lambda(x) \vv(x, \rho) + \frac{1}{2 \pi i} \int_{\mathcal{C}_0}
 	\tilde D(x, \xi, \theta) \left(M(\xi) - \tilde M(\xi) \right)
 	D(x, \rho, \xi) \, d \xi \\ +
 	\sum_{|n| \ge N} \sum_{q = 1}^m \Bigl\{
 	 \tilde D(x, \rho_{nq}, \theta) m_{nq}^{-1} \al_{nq} D(x, \rho, \rho_{nq}) -
 	 \tilde D(x, \tilde \rho_{nq}, \theta) \tilde m_{nq}^{-1} \tilde \al_{nq} D(x, \rho, \tilde \rho_{nq})
 	\Bigr\} = 0.
\end{multline}

Substituting \eqref{reprM} into int integral in \eqref{cont3int}, we obtain
\begin{equation*}
\frac{1}{2 \pi i} \int_{\mathcal{C}_0}
 	\tilde \vv(x, \theta) \left(M(\theta) - \tilde M(\theta)\right) D(x, \rho, \theta) \, d \theta =
 	\sum_{n = 1}^K \tilde \vv_{n, 0}(x) A_{n, 0}(x, \rho) -
 	\sum_{n = 1}^{\tilde K} \tilde \vv_{n, 1}(x) A_{n, 1}(x, \rho).
\end{equation*}
where
$$
	\vv_{n + k, 0}(x) := \frac{1}{k!}\frac{\partial^{k}}{\partial \rho^{k}} \vv(x, \rho_n), \quad
 	A_{n + k, 0}(x, \rho) = \sum_{\nu = k}^{m_n - 1} \frac{1}{(\nu - k)!} \al_{n + \nu} \frac{\partial^{\nu - k}}{\partial \theta^{\nu - k}} D(x, \rho, \rho_n),
$$
$$
	n \in S, \, k = \overline{0, m_n -1},
$$
$$
	\vv_{n + k, 1}(x) := \frac{1}{k!}\frac{\partial^{k}}{\partial \rho^{k}} \vv(x, \tilde \rho_n), \quad
 	A_{n + k, 1}(x, \rho) = \sum_{\nu = k}^{\tilde m_n - 1} \frac{1}{(\nu - k)!} \tilde \al_{n + \nu} \frac{\partial^{\nu - k}}{\partial \theta^{\nu - k}} D(x, \rho, \tilde \rho_n),
$$
$$
	n \in \tilde S, \, k = \overline{0, \tilde m_n -1}.
$$

It is easy to check, that
$$
 	\lim_{\substack{R \to \infty \\ \Gamma_R \subset G_{\de} }} \frac{\partial^{\nu}}{\partial \rho^{\nu}}\eps_R(x, \rho) = 0, \quad
 	\lim_{\substack{R \to \infty \\ \Gamma_R \subset G_{\de} }} \frac{\partial^{\nu + s}}{\partial \rho^{\nu} \partial \theta^s}\eps_{R, jk}(x, \rho, \theta) = 0, \quad
 	j, k = 1, 2,
$$
uniformly with respect to $x \in [0, \pi]$ and $\rho, \, \theta$ on bounded sets,
for $\eps_R(x, \rho)$ and $\eps_{R, jk}(x, \rho, \theta)$ from the proof of Lemma~\ref{lem:contour}.
Hence one can differentiate \eqref{cont3int} by $\rho$ and
\eqref{cont4int} by $\rho$ and $\theta$. From \eqref{cont3int} we get
\begin{multline} \label{cont3dif}
 	\tilde \Omega(x) \frac{\partial^{\nu}}{\partial \rho^{\nu}}\vv(x, \rho) =
 	\frac{\partial^{\nu}}{\partial \rho^{\nu}}\tilde \vv(x, \rho) -
 	\left( \sum_{n = 1}^K \tilde \vv_{n, 0}(x) \frac{\partial^{\nu}}{\partial \rho^{\nu}} A_{n, 0}(x, \rho) -
 	\sum_{n = 1}^{\tilde K} \tilde \vv_{n, 1}(x)
 	\frac{\partial^{\nu}}{\partial \rho^{\nu}} A_{n, 1}(x, \rho) \right)
 	\\-	\sum_{|n| \ge N} \sum_{q = 1}^m \left\{ \tilde \vv(x, \rho_{nq}) m_{nq}^{-1} \al_{nq} \frac{\partial^{\nu}}{\partial \rho^{\nu}} D(x, \rho, \rho_{nq}) -
 	\tilde \vv(x, \tilde \rho_{nq}) \tilde m_{nq}^{-1} \tilde \al_{nq} \frac{\partial^{\nu}}{\partial \rho^{\nu}} D(x, \rho, \tilde \rho_{nq}) \right\}.
\end{multline}
Relations \eqref{cont1} and \eqref{cont2} can be modified in a similar way.

Consider the finite-dimensional space
$$
	\mathcal{B}_0 := \bigl\{ b = \{b_k\}_{k = 1}^{K + \tilde K} \colon b_k \in \mathbb{C}^{m \times m} \bigr\},
	\quad
	\| b \|_{\mathcal{B}_0} = \max_{1 \le k \le K + \tilde K} \| b_k \|.
$$
Denote
$$
 	\vv(x, \rho)_{| \mathcal{B}_0} := \left\{ \vv_{1,0}(x), \dots, \vv_{K,0}(x), \vv_{1,1}(x), \dots, \vv_{\tilde K, 1}(x)\right\} \in \mathcal{B}_{0}.
$$
Let $G_k$, $k \in \mathbb{Z} \backslash \{ 0 \}$ be the sets $G_{ns}$, $|n| \ge N$, $q = \overline{1, p}$, renumerated in a natural way.
Instead of $\mathcal{B}$, introduce the following Banach space
$$
	\mathcal{B}_* =\mathcal{B}_{0}\times\mathcal{B}_1 = \bigl\{
	f=\{f_k\}_{k\in\mathbb{Z}},\, f_0\in\mathcal{B}_{0},\, f_k\in\mathcal{B}(G_k),\, k\in\mathbb{Z}\backslash\{0\} \colon
	\| f \|_{B_*} := \sup_{k \in \mathbb{Z}} \| f_k \| < \infty \bigr\}.
$$

Fix $x \in [0, \pi]$. Then $\vv(x, \rho)$ produces an element of $\mathcal{B}_*$ by the following rule
$$
 	\psi(x) := \vv(x, \rho)_{|\mathcal{B}_*} = \{ \psi_k(x) \}_{k \in \mathcal{Z}}, \quad
 	\psi_0(x) = \vv(x, \rho)_{| \mathcal{B}_{0}}, \quad \psi_k(x) = \vv(x, \rho)_{\mathcal{B}(G_k)}, \quad k \in \mathbb{Z} \backslash \{ 0 \}.
$$
Similarly $\tilde \psi(x) := \tilde \vv(x, \rho)_{|\mathcal{B}_{*}}$.
Then the \eqref{cont3dif} and the analogue of \eqref{cont1}
can be transformed into the following equations in the Banach space $\mathcal{B_*}$:
$$
 	\Omega(x) \tilde \psi(x) = \psi(x) (I + \tilde R(x)), \quad
 	\tilde \Omega(x) \psi(x) = \tilde \psi(x) (I - R(x)),
$$
where $I$ is the identity operator in $\mathcal{B_*}$, and $R(x)$, $\tilde R(x)$ are linear operators, acting from
$\mathcal{B_*}$ to $\mathcal{B_*}$.

According to \eqref{cont3dif}, the operator $R(x)$ acts to an arbitrary element $\psi \in \mathcal{B}$ in the following way:
\begin{equation} \label{Rseries}
 	\psi R(x) = \sum_{k \in \mathbb{Z}} \psi_k R_k(x), \quad \psi = \{ \psi_k\}_{k \in \mathbb{Z}},
\end{equation}
$$
 	\psi_0 R_0(x) = \left[ \sum_{j = 1}^K \psi_{0, j} A_{j, 0}(x, \rho) -
 	\sum_{j = 1}^{\tilde K} \psi_{0, K + j} A_{j, 1}(x, \rho) \right]_{|\mathcal{B}_*},	
$$
$$
 	\psi_k R_k(x) = \left[ \sum_{n, q \colon \rho_{nq} \in G_k} \psi_k(\rho_{nq}) m_{nq}^{-1} \al_{nq} D(x, \rho, \rho_{nq}) -
 	\sum_{n, q \colon \tilde \rho_{nq} \in G_k} \psi_k(\tilde \rho_{nq}) \tilde m_{nq}^{-1} \tilde \al_{nq} D(x, \rho, \tilde \rho_{nq})\right]_{|\mathcal{B}_*}.
$$

The results of Lemma~\ref{lem:Rbound}, Theorems~\ref{thm:homo} and~\ref{thm:main} and Remark~\ref{rem:Fredholm}
are valid for this operator as well as for one considered in Section 4.
Namely, for each fixed $x \in [0, \pi]$,
\begin{itemize}
\item The series in \eqref{Rseries} converges absolutely and the operator $R(x)$ is bounded in $\mathcal{B_*}$.
The proof is analogous to Lemma~\ref{lem:Rbound}, since the infinite-dimensional part of the considered operators is the same
(given by $R_{k, n}(x))$.

\item Equation \eqref{main} (considered now as an equation in $\mathcal{B}_*$) has a unique solution
$z(x) \in \mathcal{B}_*$ for such $x \in [0, \pi]$ that $\det \Omega(x) \ne 0$.
To prove this, obtain from \eqref{cont4int} a relation in $\mathcal{B}_*$,
similar to \eqref{fromcont4}. Moreover, the operator $\tilde R(x)$
is compact in $B_*$ and the operator $(I + \tilde R(x))$ is invertible on $\mathcal{B}_*$.

\item Equation \eqref{main} in the Banach space $\mathcal{B}_{*}$ can be used in Algorithm 4
(with obvious modifications of the Algorithm)
for the recovery of the pencil $L$ from its spectral data $SD$.
\end{itemize}

{\bf Acknowledgement.} This research was supported by Grant 1.1436.2014K
of the Ministry of Education and Science of the Russian Federation and by Grants 13-01-00134, 14-01-31042 and 15-01-04864
of Russian Foundation for Basic Research.

\vspace{1cm}

Natalia Bondarenko

Department of Mechanics and Mathematics

Saratov State University

Astrakhanskaya 83, Saratov 410012, Russia

{\it bondarenkonp@info.sgu.ru}

\end{document}